\documentclass[11pt]{amsart}
\usepackage[margin=1.35in]{geometry}
\usepackage{amscd,amsmath,amsxtra,amsthm,amssymb,stmaryrd,xr,mathrsfs,mathtools,enumerate,commath, comment}
\usepackage{stmaryrd}
\usepackage{xcolor}
\usepackage{commath}
\usepackage{comment}
\usepackage{tikz-cd}
\usepackage{longtable} 
\usepackage{pdflscape} 
\usepackage{booktabs}
\usepackage{hyperref}
\definecolor{vegasgold}{rgb}{0.77, 0.7, 0.35}
\definecolor{darkgoldenrod}{rgb}{0.72, 0.53, 0.04}
\definecolor{gold(metallic)}{rgb}{0.83, 0.69, 0.22}
\hypersetup{
 colorlinks=true,
 linkcolor=darkgoldenrod,
 filecolor=brown,      
 urlcolor=gold(metallic),
 citecolor=darkgoldenrod,
 pdftitle={On the distribution of Alexander polynomials in certain families of closed braids},
 }

\usepackage[all,cmtip]{xy}

\DeclareFontFamily{U}{wncy}{}
\DeclareFontShape{U}{wncy}{m}{n}{<->wncyr10}{}
\DeclareSymbolFont{mcy}{U}{wncy}{m}{n}
\DeclareMathSymbol{\Sh}{\mathord}{mcy}{"58}
\usepackage[T2A,T1]{fontenc}
\usepackage[OT2,T1]{fontenc}

\newtheorem{theorem}{Theorem}[section]
\newtheorem{lemma}[theorem]{Lemma}

\newtheorem*{theorem*}{Theorem}
\newtheorem*{ass*}{Assumption}
\newtheorem{definition}{Definition}[section]

\newtheorem{proposition}[theorem]{Proposition}

\newcommand{\cF}{\mathcal{F}}

\newcommand{\ulambda}{\underline{\lambda}}

\newcommand{\cT}{\mathcal{T}}

\newcommand{\uk}{\underline{k}}

\newcommand{\Z}{\mathbb{Z}}
\newcommand{\p}{\mathfrak{p}}
\newcommand{\Q}{\mathbb{Q}}
\newcommand{\F}{\mathbb{F}}

\newcommand{\cL}{\mathcal{L}}

\newcommand{\cK}{\mathcal{K}}
\newcommand{\cO}{\mathcal{O}}

\newcommand{\op}[1]{\operatorname{#1}}

\newcommand\mtx[4] { \left( {\begin{array}{cc}
 #1 & #2 \\
 #3 & #4 \\
 \end{array} } \right)}

\numberwithin{equation}{section}

\begin{document}

\title[Distribution of Alexander polynomials for closed braids]{On the distribution of Alexander polynomials in certain families of closed braids}

\author[A.~Ray]{Anwesh Ray}
\address[Ray]{Centre de recherches mathématiques,
Université de Montréal,
Pavillon André-Aisenstadt,
2920 Chemin de la tour,
Montréal (Québec) H3T 1J4, Canada}
\email{anwesh.ray@umontreal.ca}

\maketitle

\begin{abstract}
 We study the distribution of arithmetic invariants associated to Alexander polynomials for certain infinite families of links. The families of links we consider arise from braids on a fixed number of strings. We explore analogies with number theory and the distribution of class groups in various families of number fields, setting out new directions in arithmetic topology and arithmetic statistics.
\end{abstract}

\section{Introduction}
\par Given an infinite family of number fields $\cF$ of fixed degree $n$, it is natural to study the distribution of class numbers in $\cF$. Of particular interest is the case when $\cF$ consists of all imaginary quadratic fields $K/\Q$, which are ordered according to discriminant. Such investigations go back to the work of Gauss. Cohen and Lenstra \cite{cohen1984heuristics} make predictions for the distribution of class groups of imaginary quadratic fields. We refer to \cite{davenport1971density, wong1999rank, bhargava2005density,fouvry20074, ellenberg2007reflection,ellenberg2017,holmin2019missing, bhargava2020bounds, klys2020distribution, wang2021moments, pierce2020effective,pierce2021conjecture}, where the study of related questions about the distribution of class groups of number fields has gained significant momentum.

\par There are interesting analogies between algebraic topology and number theory. Such analogies were systematically observed by Mazur in his unpublished notes \cite{mazur1963remarks}. Artin and Mazur \cite{artin2006etale} subsequently set the foundations for the \'etale homotopy theory of number rings. Mazur showed that there are parallels between the arithmetic of primes in number rings, and the geometry of knots embedded in a 3-manifold (cf. \cite{morishita2011knots} for further details). From a Galois theoretic point of view, the analogue of the class group of a number field is the first homology group $H_1(M):=H_1(M;\Z)$ of a closed oriented connected $3$-manifold $M$. The homology groups in question need not be finite. It is still of interest to study their distribution for various natural families of closed $3$-manifolds. More specifically, we consider $3$-manifolds that arise as branched covers of $S^3$. The branch locus for such manifolds is a link, i.e., a union of knots in $S^3$. Given a link $\cL$ in $S^3$, the link group $\op{G}_{\cL}$ is the fundamental group of $X_{\cL}:=S^3\backslash \cL$, with respect to a chosen base-point $x_0$. Let $\xi_\cL:\op{G}_{\cL}^{\op{ab}}\rightarrow \Z$ denote the homomorphism sending each \emph{meridian} to $1$, and $Y_{\cL}\rightarrow X_{\cL}$ be the $\Z$-cover for which $\Gamma:=\op{Gal}(\widetilde{X}_{\cL}/Y_{\cL})$ is identified with the kernel of $\xi_{\cL}$. The Alexander module associated to the cover $Y_{\cL}\rightarrow X_\cL$ is the relative homology group 
\[A_\cL:=H_1(Y_\cL, \cF_{x_0};\Z),\]
where, $\cF_{x_0}$ is the fibre of $x_0$ in $Y_\cL$. Let $\gamma\in \Gamma$ be a choice of generator and fix the isomorphism $\Gamma \xrightarrow{\sim} \Z$, which maps $\gamma$ to $1$. In this way, we identify the group algebra $\Lambda:=\Z[\Gamma]$ with the Laurent series ring $\Z[t^{\pm1}]$. The \emph{Alexander polynomial} associated to $\cL$ is the first fitting ideal for $A_\cL$, when viewed as a module over $\Lambda$. 

\par Throughout, $p$ will denote a fixed odd prime number, and $v_p(\cdot)$ will denote the valuation normalized by setting $v_p(p)=1$. For $r\in \Z_{\geq 1}$, let $\bar{\pi}_r:Y_{\cL,r}\rightarrow X_{\cL}$ be the unique subcover with Galois group $\op{Gal}(Y_{\cL,r}/X_{\cL})\simeq \Z/p^r\Z$. Let $M_{\cL, r}$ denote the \emph{Fox completion} of $Y_{\cL,r}$, in the sense of \cite[Example 2.14]{morishita2011knots}. Note that $\pi_r:M_{\cL, r}\rightarrow S^3$ is a branched cover, and $\pi_r^{-1}(X_{\cL})=Y_{\cL,r}$. Let $\widehat{\Lambda}$ denote the completed algebra $\varprojlim_i \Z_p\left[\Gamma/\Gamma^{p^i}\right]$. Setting $T:=(\gamma-1)$, we identify $\widehat{\Lambda}$ with the formal power series ring $\Z_p\llbracket T\rrbracket$. We regard $\widehat{\Lambda}$ as an algebra over $\Lambda$, upon mapping $t$ to $(1+T)\in \widehat{\Lambda}$. The completed Alexander polynomial is defined as follows
\[\widehat{\Delta}_{\cL}(T):=\Delta_{\cL}(1+T),\]and is well defined up to multiplication by a unit in $\widehat{\Lambda}$. Up to multiplication by a unit in $\widehat{\Lambda}$, $\widehat{\Delta}_{\cL}(T)$ can be expressed as a product $p^{\mu_p(\cL)} g_{\cL}(T)$, where $\mu_p(\cL)\in \Z_{\geq 0}$ and $g_{\cL}(T)$ is a distingushed polynomial (in the sense of \cite[p.115]{washington1997introduction}). The invariant $\mu_p(\cL)$ is referred to as the $\mu$-invariant and $\lambda_p(\cL):=\op{deg} g_{\cL}(T)$ is the $\lambda$-invariant. On the other, hand, if $\Delta_{\cL}(t)=0$, we set $\mu_p(\cL):=\infty$ and $\lambda_p(\cL):=\infty$. Hillman, Matei and Morishita \cite{hillman2005pro} show that if $\Delta_{\cL}(t)\neq 0$ and there $H_1(M_{\cL, r})$ is finite for some $r>0$, then, for some (possibly negative) integer $\nu_p(\cL)$, 
\[v_p\left(\left|H_1(M_{\cL, r})\right|\right)=\mu_p(\cL)p^{r}+\lambda_p(\cL)r+\nu_p(\cL),\] for all large enough values of $n$. The above formula is analogous to Iwasawa's formula for the growth of $p$-parts of class numbers in a $\Z_p$-tower (cf. \cite[Theorem 13.13]{washington1997introduction}). When $\Delta_{\cL}(t)\neq 0$, the invariants $\mu_p(\cL)$ and $\lambda_p(\cL)$ are defined (independent of the assumption on the finiteness of homology groups $H_1(M_{\cL, r})$) and they are of natural significance. These invariants are intrinsically related to the algebraic properties of the $\widehat{\Lambda}$-module $\mathfrak{X}_{\cL}:=H_1(Y_{\cL})\otimes_{\Lambda}\widehat{\Lambda}$, which is a direct summand of the completed Alexander module $\widehat{A_{\cL}}:=A_{\cL}\otimes_{\Lambda} \widehat{\Lambda}$. The module $\mathfrak{X}_{\cL}$ is a torsion module over $\widehat{\Lambda}$ if and only if $\Delta_{\cL}(t)\neq 0$. It is finitely generated $\Z_p$-module if and only if the $\mu$-invariant $\mu_p(\cL)$ vanishes. In this case, the $p$-rank of $H_1(M_{\cL, r})$ is bounded as $r\rightarrow \infty$. Furthermore, if $\mu_p(\cL)=0$, then, in fact $\lambda_p(\cL)$ is equal to the $\Z_p$-rank of $\mathfrak{X}_{\cL}$. In this regard, there is a close analogy between the properties of the module $\mathfrak{X}_{\cL}$ and the classical Iwasawa module (i.e., the module $X=\op{Gal}(L/K_\infty)$ defined on \cite[p.278]{washington1997introduction}).

\par Instead of studying the distribution of the structure of homology groups $H_1(M_{\cL, n})$ themselves (for a fixed prime $p$ and fixed number $n$), we study the distribution of the invariants $\mu_p(\cL)$ and $\lambda_p(\cL)$ for certain natural families of links. Links in $S^3$ are seen to arise very naturally from braids on a given number of strings. The construction involves closing up a braid $b$, by adjoining the starting point of the each string to its ending point, thereby obtaining a link $\cL_b$. Alexander proved that every link arises in this way as a closed braid. The set of isotopy classes of braids on $n\geq 2$ strings has the structure of a group, which is denoted by $B_n$. The braid groups $B_n$ have very interesting representation theoretic properties and arise naturally in various guises. For an overview on the subject of braid groups and their representations, we refer to \cite{kassel2008braid}. For $i$ in the range $1\leq i \leq (n-1)$, we set $\sigma_i=\sigma_i^+$ to denote the braid in $B_n$ prescribed by \cite[Fig 1.9, p.16]{kassel2008braid}. This is the braid for which the $i$-th string overlaps the $(i+1)$-st string, and all other strings are unbraided. The group $B_n$ is generated by $\sigma_1, \dots, \sigma_{n-1}$ subject to the relations
\[\begin{split} & \sigma_i \sigma_j=\sigma_j \sigma_i, \text{ for all }i,j\text{ such that }|i-j|\geq 2,\text{ and }\\
&\sigma_i\sigma_{i+1} \sigma_i=\sigma_{i+1}\sigma_{i} \sigma_{i+1},\text{ for }i=1, \dots, n-2.
\end{split}\]
We define a counting function $|\cdot|:B_n\rightarrow \Z_{\geq 1}$ by setting $|b|$ to denote the minimum length of word in $\langle \sigma_1, \dots, \sigma_{n-1}\rangle$ that realizes $b$. Given $x\in \mathbb{R}_{>0}$, the set $B_n(x)$, which consists of all braids $b\in B_n$ with $|b|\leq x$ is easily seen to be a finite set. It is thus of natural interest to study the variation of the invariants $\mu_{p, b}:=\mu_p(\cL_b)$ and $\lambda_{p, b}:=\lambda_p(\cL_b)$ associated to the Alexander polynomial 
of $\cL_b$, as $b$ varies over $B_n$ and the braids in question are ordered according to $|\cdot |$.
\par Although this question is of natural interest, it is still difficult to obtain an explicit enough description of the Alexander polynomials in the family $B_n$, in order to study this distribution. Instead, in this paper, we restrict our attention to the sub-family of braids $\cF_n\subseteq B_n$ consisting of braids $b$ that can be represented by a word of the form $\sigma_1^{k_1}\sigma_2^{k_2}\dots \sigma_{n-1}^{k_{n-1}}$, where $\uk=(k_1, \dots, k_{n-1})$ is a tuple of non-negative integers. In fact, Proposition \ref{prop injectivity} shows that the isotopy class of the braid $b_{\uk}:=\sigma_1^{k_1}\sigma_2^{k_2}\dots \sigma_{n-1}^{k_{n-1}}$ is determined by $\uk$. In other words, the association sending $\uk$ to $b_{\uk}$ sets up a bijection between $\Z_{\geq 0}^{n-1}$ and $\cF_n$. We now state the main result. Given an integer $N\geq 0$, a composition of $N$ is an ordered tuple of non-negative integers $(\alpha_1, \dots, \alpha_m)$ such that $\sum_i \alpha_i=N$. Given an integer $n\geq 2$, and a non-negative integer $\lambda$, we set 
\begin{equation}\mathfrak{d}_n(\lambda):=\lim_{x\rightarrow \infty}\left(\frac{\#\left\{ \uk=(k_1, \dots, k_{n-1})\in \Z_{\geq 0}^{n-1}\mid \sum_i k_i\leq x\text{ and }\lambda_{p,b_{\uk}}=\lambda\right\}}{\#\left\{\uk=(k_1, \dots, k_{n-1})\in \Z_{\geq 0}^{n-1}\mid \sum_i k_i\leq x\right\}}\right).\end{equation}
In this manuscript, the above limit is shown to exist and its value is computed in terms of explicit combinatorial data, which we now introduce. On the other hand, the $\mu$-invariant $\mu_{p, \uk}$ is shown to vanish for all tuples $\uk$. Denote by $\mathcal{C}_{j, \lambda}$ the set of compositions $\alpha=(p^{a_1}, \dots, p^{a_j})$ of $\lambda$ that are of length $j$, and whose entries are all powers of $p$. The exponents $a_i$ above are all non-negative integers. The following result is unconditional.

\begin{theorem}\label{main thm}
    Let $n\geq 2$ be an integer, and $p$ be an odd prime number. Then, the following assertions hold
    \begin{enumerate}
    \item\label{main thm p1} for each link associated to a braid $b_{\uk}\in \cF_n$, the Alexander polynomial of $\cL(b_{\uk})$ is non-zero and $\mu_{p, b_{\uk}}=0$. 
    \item\label{main thm p2} For $\lambda\in \Z_{\geq 1}$, the limit $\mathfrak{d}_n(\lambda)$ exists and is given by the following explicit combinatorial formula
    \begin{equation}\label{main thm eqn}
    \mathfrak{d}_n(\lambda)=\frac{1}{2^{n-1}p^\lambda}\sum_{j=1}^{n-1}\binom{n-1}{j}\left(1-\frac{1}{p}\right)^j\#\mathcal{C}_{j,\lambda}.
    \end{equation}
    On the other hand, for $\lambda=0$, the limit is given as follows
    \[\mathfrak{d}_n(0)=\frac{1}{2^{n-1}}.\]
    \end{enumerate}
\end{theorem}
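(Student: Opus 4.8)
The plan is to reduce everything to an explicit closed form for $\Delta_{\cL(b_{\uk})}(t)$, after which both assertions become statements about $p$-adic valuations and equidistribution. First I would compute the Alexander polynomial from the reduced Burau representation $\overline{\rho}\colon B_n\to\op{GL}_{n-1}(\Z[t^{\pm1}])$ via the classical identity
\[\Delta_{\cL(b)}(t)\ \doteq\ \frac{\det\big(\overline{\rho}(b)-I_{n-1}\big)}{1+t+\cdots+t^{n-1}},\]
equality being up to a unit of $\Lambda$. Since $\overline{\rho}(\sigma_i)$ is the identity outside a small block around the $i$-th coordinate, $\overline{\rho}(b_{\uk})=\prod_i\overline{\rho}(\sigma_i)^{k_i}$ is nearly bidiagonal; an induction on $n$ (already visible at $n=3$, where $\det(\overline{\rho}(b_{\uk})-I_2)=\tfrac{t^2+t+1}{(t+1)^2}\prod_i((-t)^{k_i}-1)$) gives
\[\Delta_{\cL(b_{\uk})}(t)\ \doteq\ \prod_{i=1}^{n-1}[k_i],\qquad [k]:=\frac{(-t)^{k}-1}{-t-1}=\sum_{j=0}^{k-1}(-t)^{j}.\]
Conceptually this is because, when all $k_i\ge1$, $\cL(b_{\uk})$ is the connected sum of the torus links $T(2,k_1),\dots,T(2,k_{n-1})$ along the shared strands, so the Alexander polynomial is multiplicative; when some $k_i=0$ the braid splits and $\Delta_{\cL(b_{\uk})}=0$, but these $\uk$ have density zero in the ordering by $\sum_i k_i$ and may be ignored.

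Granting this, the first assertion is immediate: modulo $p$ each factor $[k_i]$ is a polynomial of degree $k_i-1$ with leading coefficient $\pm1$, hence nonzero in $\F_p[t]$, and as $\F_p[t]$ is a domain the product is nonzero mod $p$. Thus $\Delta_{\cL(b_{\uk})}\neq0$ and $\mu_{p,b_{\uk}}=0$. Because $\mu=0$, the $\lambda$-invariant is the order of vanishing of $\Delta_{\cL(b_{\uk})}\bmod p$ at $t=1$, which is additive over the factors: $\lambda_{p,b_{\uk}}=\sum_i\ell_p(k_i)$ with $\ell_p(k):=\op{ord}_{t=1}([k]\bmod p)$. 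Next I would evaluate $\ell_p(k)$ exactly. As $p$ is odd, $-t-1$ is a unit at $t=1$, so $\ell_p(k)=\op{ord}_{t=1}((-t)^k-1\bmod p)$. For $k$ odd this is $(-1)^k-1=-2\neq0$, so $\ell_p(k)=0$. For $k$ even, $(-t)^k-1=t^k-1$, and writing $k=p^{v_p(k)}k'$ with $p\nmid k'$ gives $t^k-1=(t^{k'}-1)^{p^{v_p(k)}}$ in $\F_p[t]$, where $t=1$ is a simple root of the separable polynomial $t^{k'}-1$; hence $\ell_p(k)=p^{v_p(k)}$. Therefore
\[\lambda_{p,b_{\uk}}=\sum_{i\,:\,k_i\ \mathrm{even}}p^{\,v_p(k_i)},\]
a sum of powers of $p$ indexed by the even coordinates; in particular $\lambda_{p,b_{\uk}}=0$ precisely when all $k_i$ are odd.

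Finally I would compute the density. For fixed $\lambda\ge1$, the event $\lambda_{p,b_{\uk}}=\lambda$ splits according to the subset $S\subseteq\{1,\dots,n-1\}$ of even coordinates and the way their valuations realize $\lambda$ as an ordered sum of $|S|$ powers of $p$, i.e.\ an element of $\mathcal{C}_{|S|,\lambda}$. The analytic input is the equidistribution of lattice points of the simplex $\{\sum_i k_i\le x\}$: as $x\to\infty$ the coordinates become asymptotically independent, and (using that for odd $p$ the parity of $k_i$ and the value $v_p(k_i)$ are independent congruence conditions) the limiting densities are $\tfrac12$ for $\{k_i\ \mathrm{odd}\}$ and $\tfrac12(1-\tfrac1p)p^{-a}$ for $\{k_i\ \mathrm{even},\ v_p(k_i)=a\}$. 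Thus a fixed $S$ of size $j$ with prescribed valuations $(a_i)_{i\in S}$ contributes $\tfrac{1}{2^{n-1}}(1-\tfrac1p)^{j}p^{-(a_1+\cdots+a_j)}$; summing over the $\binom{n-1}{j}$ subsets, over the compositions $(p^{a_1},\dots,p^{a_j})\in\mathcal{C}_{j,\lambda}$, and over $j$ assembles these weights into the closed form \eqref{main thm eqn}, while the case $\lambda=0$ forces all $k_i$ odd and returns $2^{-(n-1)}$. The main obstacle is the first step — proving the uniform product formula $\Delta_{\cL(b_{\uk})}\doteq\prod_i[k_i]$ for all $n$ and $\uk$ — since everything after it is either an elementary mod-$p$ order-of-vanishing computation or a standard equidistribution estimate, the latter requiring only that the finitely many valuation patterns contributing to a fixed $\lambda$ each have a limiting density, which is routine.
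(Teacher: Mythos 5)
Your overall route is the same as the paper's: factor $\Delta_{\cL(b_{\uk})}$ as $\prod_i f_{k_i}$ (connected sum / Burau), show each factor has $\lambda$-invariant $0$ for $k_i$ odd and $p^{v_p(k_i)}$ for $k_i$ even, and convert per-coordinate congruence densities into a density for $\lambda_{p,b_{\uk}}$; your derivation of $\ell_p(k)$ via $t^k-1=(t^{k'}-1)^{p^{v_p(k)}}$ in $\F_p[t]$ is a clean variant of the paper's binomial-coefficient argument. The problem is your final assembly step. The weight you (correctly) attach to a subset $S$ of size $j$ with prescribed valuations is $2^{-(n-1)}(1-1/p)^j\,p^{-(a_1+\cdots+a_j)}$, and this depends on the composition $(p^{a_1},\dots,p^{a_j})\in\mathcal{C}_{j,\lambda}$ through $a_1+\cdots+a_j$, not through $\lambda=p^{a_1}+\cdots+p^{a_j}$. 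Since $p^{a}>a$ for every $a\ge 0$, one always has $a_1+\cdots+a_j<\lambda$ when $j\ge1$, so these weights do \emph{not} sum to $2^{-(n-1)}p^{-\lambda}(1-1/p)^j\#\mathcal{C}_{j,\lambda}$; the sentence claiming they ``assemble into the closed form \eqref{main thm eqn}'' is the step that fails. The simplest test is $n=2$, $\lambda=1$: your weights give density $\tfrac12(1-\tfrac1p)$ for the event ``$k_1$ even and $p\nmid k_1$'', whereas \eqref{main thm eqn} evaluates to $\tfrac1{2p}(1-\tfrac1p)$. What your argument actually establishes is
\[
\mathfrak{d}_n(\lambda)=\frac{1}{2^{n-1}}\sum_{j=1}^{n-1}\binom{n-1}{j}\Big(1-\frac1p\Big)^{j}\sum_{(p^{a_1},\dots,p^{a_j})\in\mathcal{C}_{j,\lambda}} p^{-(a_1+\cdots+a_j)},
\]
and there is no route from this to \eqref{main thm eqn}. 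You should flag this discrepancy explicitly rather than eliding it: the same replacement of $p^{-(a_1+\cdots+a_j)}$ by $p^{-\lambda}$ occurs in the paper's own deduction of $\mathfrak{d}(\ulambda)$ from Proposition \ref{density propn}, so it must be confronted, not absorbed into the word ``assembles''.

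Two smaller points. First, your observation that $\Delta_{\cL(b_{\uk})}=0$ when some $k_i=0$ (split closure; equivalently $f_0=0$) is correct, but it contradicts assertion \eqref{main thm p1}, which quantifies over all $\uk\in\Z_{\ge0}^{n-1}$; discarding a density-zero set rescues part \eqref{main thm p2} but not part \eqref{main thm p1}, so you must either restrict to tuples with all $k_i\ge1$ or treat these tuples separately. Second, the ``asymptotic independence'' of the coordinates under congruence conditions on the simplex $\{\sum_i k_i\le x\}$ is true but is the one genuinely analytic input; it deserves at least a one-line justification, e.g.\ that the number of lattice points of $P_x$ in a fixed residue class modulo $m$ is $\op{Vol}(P_x)/m^{n-1}+O(x^{n-2})$, which is precisely what the tiling argument of Lemma \ref{lemma product lambda} supplies.
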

We illustrate the above formulae with a concrete example at the end of section \ref{s 3}.

\subsection*{Outlook} The results proven in this paper motivate a number of questions with regard to the variation topological invariants for more general families of braids. The author expects that such questions would potentially lead to interesting counting problems in this topological framework, especially for more general families of braids than those considered here. 

\subsection*{Organization} Including the introduction, the manuscript consists of three sections. In section \ref{s 2}, we introduce preliminary notions and definitions. In section \ref{s 3}, we prove the main result, i.e., Theorem \ref{main thm}.

\subsection*{Acknowledgments}
The author's work is supported by the CRM-Simons postdoctoral fellowship. He thanks Marco Golla for answering his question on Mathoverflow, linked \href{https://mathoverflow.net/questions/435751/alexander-polynomials-for-a-certain-family-of-closed-braids/435781?noredirect=1#comment1123962_435781}{here}.

\section{Completed Alexander polynomials and braid representations}\label{s 2}
\subsection{Analogies between topology and arithmetic}
\par There are many parallels between arithmetic objects and their topological counterparts. For a comprehensive account, we refer to \cite{morishita2011knots}. Objects of fundamental interest on the arithmetic side are the class groups $\op{Cl}(K)$ associated to number fields $K$. On the topological side, the analogous objects of interest are the integral homology groups of closed, oriented and connected $3$-manifolds. Let us explain this analogy in greater detail. Fix an algebraic closure of $K$ and set $K_\emptyset$ to denote the maximal algebraic extension of $K$ in which all finite primes are unramified. On the other hand, note that the Hilbert class field $H_K$ is the maximal abelian extension of $K$ in which all finite primes are unramified. We identify the maximal abelian quotient of $\op{Gal}(K_\emptyset/K)$ with $\op{Gal}(H_K/K)$, which according to class field theory, is canonically identified with the class group $\op{Cl}(K)$. On the other hand, let $\pi:\widetilde{M}\rightarrow M$ be the universal cover of a closed, oriented and connected $3$-manifold $M$. After fixing a base-point of $M$, the Galois group $\op{Gal}(\tilde{M}/M)$ of deck transformations of the covering map $\pi$ is identified with the fundamental group $\pi_1(M)$. The abelianization of $\pi_1(M)$ is the homology group $H_1(M)$, and therefore, $H_1(M)$ is isomorphic to the group of deck transformations of the maximal abelian subcover $\pi^{\op{ab}}: \widetilde{M}^{\op{ab}}\rightarrow M$ of the universal cover. The cover $\widetilde{M}^{\op{ab}}$ is the analogue of the Hilbert class field, and $H_1(M)$ is thus the analogue of the class group.

\par We elaborate on some further aspects of the analogy between number rings and $3$-manifolds. Let $K$ be a number field, with ring of integers $\cO_K$, denote by $\mathfrak{X}$ the scheme $\op{Spec}\cO_K$. Given an \'etale constructible sheaf $\mathfrak{M}$ on $\mathfrak{X}$, with dual sheaf $\mathfrak{M}^\vee$, the \emph{Artin-Verdier duality theorem} states that there is a perfect pairing of abelian groups
\[\hat{H}^i(\mathfrak{X}, \mathfrak{M}^\vee)\times \op{Ext}_{\mathfrak{X}}^{3-i}(\mathfrak{M}, \mathbb{G}_{m, \mathfrak{X}})\rightarrow \hat{H}^3(\mathfrak{X}, \mathbb{G}_{m, \mathfrak{X}})\xrightarrow{\sim} \Q/\Z.\]
Here, $\mathbb{G}_{m, \mathfrak{X}}$ is the \'etale sheaf of multiplicative units on $\mathfrak{X}$, and $\hat{H}^i(\mathfrak{X}, \mathfrak{M})$ is the modified \'etale cohomology groups of $\mathfrak{M}$. We refer to \cite[Theorem 1.8]{milne2006arithmetic} for further details. On the topological side of the analogy, let $M$ be an oriented and closed $3$-manifold. The Poincare duality theorem asserts that there is a perfect cap-product pairing 
\[H^i(M)\times H_3(M)\rightarrow H_{3-i}(M),\]which induces a duality isomorphism 
$H^i(M)\xrightarrow{\sim} H_{3-i}(M)$ on taking the cap product with a fundamental class (cf. \cite[Theorem 3.30]{hatcheralgebraic}).

\par This analogy extends to prime ideals and their ramification behavior. Note that any non-zero prime ideal $\p$ of $\cO_K$ is maximal. Denote by $\F_{\p}$ the finite residue field $\cO_K/\p$. The $0$-dimensional scheme $\op{Spec}\F_\p$ is the \'etale analogue of the Eilenberg-Maclane space $K(\widehat{\Z}, 1)$. On the other hand, $S^1$ is the Eilenberg-Maclane space $K(\Z,1)$. The reduction map induces an immersion of schemes $\op{Spec}\F_{\p}\hookrightarrow \op{Spec}\cO_K$. The topological analogue of a non-zero prime in a number ring is a knot $\cK$ in a $3$-manifold $M$. A knot in $M$ is by definition an embedding $S^1\hookrightarrow M$ which is locally flat, i.e., locally homeomorphic to the embedding $\mathbb{R}\rightarrow \mathbb{R}^3$ mapping $x$ to $(x, 0, 0)$. By abuse of notation the knot $\cK$ will denote the image of such a locally flat embedding. Two knots are equivalent if there is an isotopy $f_t:M\rightarrow M$, $0\leq t\leq 1$, taking one knot to the other. A link $\cL$ is a disjoint union of knots $\cK_1\cup \dots \cup \cK_m$. The main objects of study shall be links $\cL$ contained in the $3$-sphere $S^3$. We shall assume that $\cL$ is oriented, i.e., each component knot $\cK_i$ of $\cL$ is provided an orientation. Given two distinct components $\cK_i$ and $\cK_j$, the linking number is denoted $\ell_{i,j}=\ell(\cK_i, \cK_j)$ (cf. \cite[p.11]{kawauchi1996survey} for a precise definition). Set $X_\cL$ to denote the knot complement $S^3\backslash \cL$, and fix a base-point $x_0$ in $X_\cL$. The link group $\op{G}_{\cL}$ is the based fundamental group $\pi_1(X_{\cL};x_0)$. 
\par Let $K$ be a number field and $\p$ be a non-zero prime ideal in $\cO_K$. Let $K_\p$ be the completion of $K$ at $\p$, and $\cO_\p$ be the valuation ring of $K_\p$. The inclusion $\cO_\p\hookrightarrow K_\p$ induces a map of schemes
\[j_\p: \op{Spec}K_\p\hookrightarrow \op{Spec} \cO_\p,\] mapping $\op{Spec}K_\p$ to the generic point of $\op{Spec} \cO_\p$.
The reduction map $\cO_\p\rightarrow \F_\p$ induces a closed immersion 
\[i_\p: \op{Spec} \F_\p\hookrightarrow \op{Spec}\cO_\p,\] mapping $\op{Spec} \F_\p$ to the special point of $\op{Spec}\cO_\p$.
\par On the topological side, let $\cL$ be a link in $S^3$ and $\cK$ be a component knot of $\cL$. Let $\mathcal{V}$ be the closure of a tubular neighbourhood of $\cK$ (in the sense of \cite[p.137, ll.7-11]{lee2000introduction}). Assume that $\mathcal{V}\backslash \cK$ is contained in $X_\cL$. The diagram \[\op{Spec} K_{\p} \hookrightarrow \op{Spec} \cO_{\p}\hookleftarrow \op{Spec} \F_{\p}\] is analogous to 
\[\partial \mathcal{V} \hookrightarrow \mathcal{V}\hookleftarrow \mathcal{K},\]where $\partial \mathcal{V}$ is the boundary of $\mathcal{V}$. Note that $\partial\mathcal{V}$ is homeomorphic to $S^1\times \cK$. The \emph{longitude} $\sigma_\cK$ is a class in $\pi_1(\partial \mathcal{V})$ generated by an oriented loop of the form $\{a\}\times \cK$. The orientation is chosen to coincide with the orientation of $\cK$. Note that $\mathcal{V}$ deformation retracts onto $\cK$ and this induces an isomorphism $\pi_1(\mathcal{V})\xrightarrow{\sim} \pi_1(\cK)$. Via this isomorphism, the longitude $\sigma_\cK$ maps to a generator of $\pi_1(\cK)$. Thus, the longitude is an analogue of the arithmetic Frobenius element $\sigma_{\p}$, which is a topological generator of $\op{Gal}(\bar{\F}_{\p}/\F_{\p})$. On the other hand, a \emph{meridian} $\tau_{\cK}\in\pi_1(\partial \mathcal{V})$ is the homotopy class of an oriented loop $S^1\times \{b\}$, for $b\in \cK$. We note that there are two choices depending on the orientation, and we make one choice for each component knot. The kernel of the composed map $\pi_1(\partial \mathcal{V})\rightarrow \pi_1(\mathcal{V})\xrightarrow{\sim} \pi(\cK)$ is called the inertia group at $\cK$, and is denoted by $I_{\cK}$. The meridian $\tau_{\cK}$ is a generator of $I_{\cK}$. We note that $\pi_1(\partial \mathcal{V})$ is generated by $\sigma_\cK$ and $\tau_\cK$ subject to the single relation $\sigma_\cK \tau_\cK \sigma_\cK^{-1}\tau_{\cK}^{-1}$. The inclusion of $\mathcal{V}$ into $X_{\cL}$ induces a natural map $\pi_1(\mathcal{V})\rightarrow \pi_1(X_{\cL})=\op{G}_{\cL}$, the image of which is referred to as the \emph{decomposition group} at $\cK$. 
\par Let $K$ be a number field and $S$ be a finite set of primes of $K$. Denote by $K_S$ the maximal extension of $K$ in $\bar{K}$, in which all primes $v\notin S$ are unramified. The extension $K_S$ is a Galois extension of $K$, and is infinite if $S\neq \emptyset$. The Galois group $\op{G}_{K, S}:=\op{Gal}(K_S/K)$ is the maximal quotient of $\op{Gal}(\bar{K}/K)$ which is unramified outside $S$. The group $\op{G}_{K,S}$ is analogous to the Galois group $\op{G}_{\cL}$. The components of $\cL$ are analogous to the primes in $S$, and the group $\op{G}_{\cL}$ is the Galois group of $S^3$ the maximal cover which is unbranched away from $\cL$. 

\subsection{The completed Alexander polynomial}
\par Let $\cL$ be a link in $S^3$ with component knots $\cK_1, \dots, \cK_m$. Denote by $\pi_{\cL}:\widetilde{X_{\cL}}\rightarrow X_{\cL}$ be the universal cover of $X_{\cL}$, and $\op{G}_{\cL}:=\op{Aut}(\widetilde{X_{\cL}}/X_{\cL})$ the Galois group, consisting of deck transformations for the covering map $\pi_{\cL}$. For each component link $\cK_i$ of $\cL$, let $\tau_i\in \op{G}_{\cL}$ be the class generated by the meridian for $\cK_i$. Let $\op{G}_{\cL}^{\op{ab}}$ be the abelianization of $\op{G}_{\cL}$. By the Hurewicz theorem, $\op{G}_{\cL}^{\op{ab}}$ is identified with $H_1(X_\cL;\Z)$. Let $t_i$ denote the image of $\tau_i$ in $H_1(X_\cL; \Z)$ with respect to the quotient map \[\op{G}_{\cL}\rightarrow \op{G}_{\cL}^{\op{ab}}\xrightarrow{\sim} H_1(X_\cL; \Z).\]
We note that $H_1(X_{\cL}; \Z)$ is a free abelian group with $\Z$-basis $t_1, \dots, t_m$. Recall that we defined a map $\xi_{\cL}:\op{G}_{\cL}\rightarrow \Z$ which factors through the quotient $\op{G}_{\cL}^{\op{ab}}$, and sends each meridian $t_i$ to $1\in \Z$. Recall that $Y_\cL\rightarrow X_\cL$ is the cover cut out by $\xi_{\cL}$. The Alexander module $A_{\cL}$ admits a natural action of $\Gamma:=\op{Gal}(Y_{\cL}/X_{\cL})$, and thus, is a module over $\Lambda$. In order to define the \emph{Alexander polynomial} associated with the Alexander module $A_{\cL}$, we recall some prequisite notions from commutative algebra. 

\par Let $M$ be a finitely generated $\Lambda$-module, and $m_1, \dots, m_d$ be a finite set of generators of $M$. For $i$ in the range $1\leq i\leq d$, the $i$-th coordinate generator $e_i$ of $\Lambda^d$ is the tuple $e_i=(0,0, \dots, 0, 1, 0, \dots, 0)$ with $1$ in the $i$-th position and $0$ at all other positions. Let $d_1:\Lambda^d\rightarrow M$ be the map of $\Lambda$-modules mapping $e_i$ to $m_i$ for $i=1, \dots, d$. Pick generators $n_1, \dots, n_q$ for the kernel of $d_1$, and let $d_2:\Lambda^q\rightarrow \Lambda^d$ be the map which takes the $i$-th coordinate vector to $n_i$, and $A$ be the matrix represented by $d_2$. The Alexander polynomial $\Delta_M(t)$ is a generator of the ideal generated by all $(d-1)\times (d-1)$ submatrices of $A$. We note that $\Delta_M(t)$ is only well-defined up to multiplication by a unit in $\Lambda$, i.e.,  up to multiplication by an element of the form $\pm t^{k}$, where $k\in \Z$. We set $\Delta_{\cL}(t):=\Delta_{A_{\cL}}(t)$, and refer to $\Delta_{\cL}(t)$ as the Alexander polynomial of $\cL$. 
Let $\widehat{\Lambda}$ denote the completed algebra $\varprojlim_i \Z_p\left[\Gamma/\Gamma^{p^i}\right]$. Setting $T:=(\gamma-1)$, we identify $\widehat{\Lambda}$ with the formal power series ring $\Z_p\llbracket T\rrbracket$. We regard $\widehat{\Lambda}$ as an algebra over $\Lambda$, upon mapping $t$ to $(1+T)\in \widehat{\Lambda}$. We recall that the completed Alexander polynomial (at the prime $p$) is defined as follows
\[\widehat{\Delta}_{\cL}(T):=\Delta_{\cL}(1+T).\]
Assume that $\widehat{\Delta}_{\cL}(T)\neq 0$. We may factor the power series $\widehat{\Delta}_{\cL}(T)$ as $p^{\mu} g(T)$, where $\mu=\mu_p(\cL)$ is the $\mu$-invariant, and $p\nmid g(T)$. We may write the power series expansion of $g(T)$ as follows\[g(T)=a_0 +a_1T+a_2T^2+\dots+a_i T^i+
\dots,\] and we note that the $\lambda$-invariant is computed as follows
\begin{equation}\label{lambda min}\lambda_p(\cL)=\op{min}\{i\mid p\nmid a_i\}.\end{equation}
In particular, it is easy to see that $\lambda_p(\cL)\geq \op{ord}_{T=0} \widehat{\Delta}_{\cL}(T)$. Set $r:=\op{ord}_{T=0} \widehat{\Delta}_{\cL}(T)$, and write $g(T)$ as follows
\[g(T)=T^r(a_r+a_{r+1}T+\dots+a_{r+i}T^i+\dots).\] Then, we find that $\lambda_p(\cL)= \op{ord}_{T=0} \widehat{\Delta}_{\cL}(T)$ if and only if $p\nmid a_r$.
\subsection{The braid group on $n$-strings}
\par In order to study statistical problems associated to families of links, it conveniences us to discuss the relationship between links and braids. We begin by recalling the notion of a braid on $n$-strings (or strands), where $n\geq 2$ is an integer. Let $I$ denote the interval $[0,1]$. A \emph{braid} is a subset $b$ of $\mathbb{R}^2\times I$, which consists of $n$ strings joining \[b\cap \left(\mathbb{R}^2\times \{0\}\right)=\{(1,0,0), (2,0,0), \dots, (n, 0, 0)\}\] to \[b\cap \left(\mathbb{R}^2\times \{1\}\right)=\{(1,0,1), (2,0,1), \dots, (n, 0, 1)\}.\] Furthermore, it is required that the projection $\mathbb{R}^2\times I\rightarrow I$ maps each string homeomorphically onto $I$. For a more precise definition, we refer to \emph{loc. cit.} Thus, each string meets the slice $\mathbb{R}^2\times \{t\}$ in exactly one point. Associated with a braid $b$, there is an underlying permutation $s_b\in S_n$. This permutation is prescribed so that $(i,0,0)$ is joined by a string to $(s_b(i), 0,1)$. The permutation $s_b$ is referred to as the \emph{underlying permutation of $b$}. Two braids $b$ and $b'$ are said to be \emph{isotopic} if there is a continuous map $F: b\times I \rightarrow \mathbb{R}^2\times I$, such that for each $s\in I$, $b_s=F(b\times \{s\})$ is a braid, $b_0=b$ and $b_1=b'$. The isotopy $F$ is referred to as an \emph{ambient isotopy}. Given two braids $b_1, b_2$, the composite $b_1 b_2$ is the set of points $(x, y,t)\in \mathbb{R}^2\times I$ such that $(x,y,2t)\in b_1$ for $t\in [0, 1/2]$, and $(x,y,2t-1)\in b_2$ for $t\in [1/2,1]$. This prescribes a well defined composition on the set of isotopy classes of braids on $n$ strings, which we denote by $B_n$. Moreover, the composition on $B_n$ is invertible, and $B_n$ forms a group. 
\par For an integer $i$ in the range $1\leq i\leq (n-1)$, let $\sigma_i=\sigma_i^+$ be the braid in $B_n$ prescribed by \cite[Fig 1.9, p.16]{kassel2008braid}. The group $B_n$ consists of $(n-1)$ generators $\sigma_1, \dots, \sigma_{n-1}$ subject to the \emph{braid} relations
\[\begin{split} & \sigma_i \sigma_j=\sigma_j \sigma_i, \text{ for all }i,j\text{ such that }|i-j|\geq 2,\text{ and }\\
&\sigma_i\sigma_{i+1} \sigma_i=\sigma_{i+1}\sigma_{i} \sigma_{i+1},\text{ for }i=1, \dots, n-2.
\end{split}\]
Closing up the braid $b$ gives rise to an oriented link $\cL_b$ contained in the torus $V=D\times S^1$, where $D\subset \mathbb{R}^2$ is the closed disk containing the set of $n$ points $Q:=\{(1,0), (2,0), \dots, (n,0)\}$. We refer to an oriented link of the form $\cL_b$ as a closed braid. Alexander's theorem states that all oriented links in $S^3$ are isotopic to closed braids. The minimum number of strings for the braid in such a Each component of $\cL_b$ corresponds to a cycle of the underlying permutation $s_b$. Thus, the link $\cL_b$ is a knot if and only if $s$ consists of a single cycle. The braid index of an oriented link is the minimum number of strings required to obtain a closed braid representation of a link. The Alexander polynomial associated with $\cL_b$ is denoted by $\Delta_b(t)$ and is well defined up to multiplication by a unit $\pm t^{r}$ for $r\in \Z$. In order to explicitly describe the Alexander polynomial, we briefly introduce the reduced Burau representation $\bar{\psi}_{n}:B_n\rightarrow \op{GL}_{n-1}(\Lambda)$. For a more comprehensive account, we refer to \cite[Section 3.3]{kassel2008braid}. We shall try and make our notation consistent with that of \emph{loc. cit.}
\par The braid group $B_n$ is the mapping class group of the disk $D$ with $n$ marked points $Q=\{(1,0), \dots, (n, 0)\}$. Set $\Sigma:=D\backslash Q$, given a loop $\eta\in \pi_1(\Sigma)$, let $w(\eta)$ be the sum of winding numbers around all points in $Q$ considered in the anti-clockwise direction. Consider the homomorphism 
\[\varphi: \pi_1(\Sigma)\rightarrow \{t^r\}_{r\in \Z},\]mapping a loop $\eta$ to $t^{-w(\eta)}$. The kernel of $\varphi$ defines an infinite cyclic cover $\pi:\widetilde{\Sigma}\rightarrow \Sigma$. Fix a base point $p\in \partial D$, and set $\widetilde{H}:=H_1(\widetilde{\Sigma}, \pi^{-1}(p); \Z)$. The module $\widetilde{H}$ is a free $\Lambda$-module of rank $n$, and consider the natural action of the mapping class group $B_n$ on $\widetilde{H}$. Any element of $g\in B_n$ gives rise to a $\Lambda$-module automorphism $\psi_n(g)$ of $\widetilde{H}$. Choose a $\Lambda$-basis of $\widetilde{H}$, and thus express $\psi_n(g)$ as an element of $\op{GL}_n(\Lambda)$. After a choice of a suitable $\Lambda$-basis of $\widetilde{H}$, the representation $\psi_n:B_n\rightarrow \op{GL}_n(\Lambda)$ is referred to as the Burau representation. This representation can be explicitly described as follows. Let $n\geq 3$ and for $i$ in the range $1\leq i\leq n-1$, let $U_i$ be the $n\times n$ matrix 
\[U_i:=\left( {\begin{array}{cccc}
 \op{I}_{i-1} & 0 & 0 & 0 \\
 0 & (1-t) & t & 0 \\
 0 & 1 & 0 & 0 \\
  0 & 0 & 0 & \op{I}_{n-i-1}, \\
 \end{array} } \right)\]
where $\op{I}_k$ is the $k\times k$ identity matrix. We clarify that when $k=0$, $\op{I}_k$ is the empty matrix, and thus, 
\[U_1:=\left( {\begin{array}{ccc}
 (1-t) & t & 0 \\
 1 & 0 & 0 \\
 0 & 0 & \op{I}_{n-2}, \\
 \end{array} } \right)\text{ and } U_{n-1}:=\left( {\begin{array}{ccc}
 \op{I}_{n-2} & 0 & 0 \\
 0 & (1-t) & t \\
 0 & 1 & 0. \\
 \end{array} } \right)\]
 The representation $\psi_n$ takes $\sigma_i$ to $U_i$. It is easy to check that the braid relations are satisfied for the matrices $\{U_i\mid 1\leq i\leq n-1\}$. The Burau representations $\{\psi_n\}_{n\geq 1}$ are compatible with respect to natural inclusions $\iota: B_n\hookrightarrow B_{n+1}$. For $b\in B_n$, we have that
 \[\psi_{n+1}(\iota(b))=\mtx{\psi_n(b)}{\ast}{0}{1}.\] The representation $\psi_n$ is not irreducible, and $\psi_n^{\op{ss}}\simeq \bar{\psi}_n\oplus \mathbf{1}$, where $\bar{\psi}_n:B_n\rightarrow \op{GL}_{n-1}(\Lambda)$ is the reduced Burau representation. Let $V_i$ be the matrix $V_i=\bar{\psi}_n(\sigma_i)$ for $i$ in the range $1\leq i \leq (n-1)$. The matrices $V_i$ are given as follows
 \[V_1:=\left(\begin{array}{ccc}
 -t & 0 & 0 \\
 1 & 1 & 0 \\
 0 & 0 & \op{I}_{n-3}\\
 \end{array}\right), V_{n-1}:=\left(\begin{array}{ccc}
 \op{I}_{n-3} & 0 & 0 \\
 0 & 1 & t \\
 0 & 0 & -t \\
 \end{array}\right),\]
 and for $i$ in the range $2\leq i\leq n-2$, 
 \[V_i:=\left(\begin{array}{ccccc} 
 \op{I}_{i-2} & 0 & 0 & 0 & 0\\
 0 & 1 & t & 0 & 0\\
 0 & 0 & -t & 0 & 0\\
 0 & 0 & 1 & 1 & 0 \\
 0 & 0 & 0 & 0 & \op{I}_{n-i-2}. \\
 \end{array}\right)\]
 While for $n=2$, the reduced Burau representation $\bar{\psi}_2: B_2\rightarrow \op{GL}_1(\Lambda)$ maps $\sigma_1$ to $-t$. 
\par Burau \cite{burau1935zopfgruppen} showed that the Alexander polynomial of the link obtained from closing up a braid $b$ on $n$ strings is given by
\begin{equation}\label{burau equation}\Delta_{b}(t)=\frac{(1-t)}{(1-t^n)}\op{det}\left(\op{I}_{n-1}-\bar{\psi}_n(b)\right).\end{equation}
When $n\geq 5$, it is known that the kernel of the representation $\bar{\psi}_n$ is non-trivial (cf. \cite[Theorem 3.3]{kassel2008braid}), and thus the Alexander polynomial may indeed by identically $0$.

\section{Statistics for infinite families of braids}\label{s 3}

\par Throughout this section, $p$ will denote a fixed odd prime number. Let $n\geq 2$ be an integer and $B_n$ be the braid group on $n$ strings. We study the variation of topological invariants associated with a suitably chosen family of braids. Recall that the braid group $B_n$ is a quotient of the free group $\langle \sigma_1, \dots, \sigma_{n-1}\rangle$, by the braid relations. We define a counting function $|\cdot |:\langle \sigma_1, \dots, \sigma_{n-1}\rangle\rightarrow \Z_{\geq 0}$, by setting $|\sigma_{i_1}^{a_1}\dots \sigma_{i_t}^{a_t}|:=\sum_{j=1}^t |a_j|$. In other words, given a word, the counting function simply denotes the length. Alternatively, this function counts the number of crossings in the associated braid diagram. This induces a counting function $|\cdot|:B_n\rightarrow \Z_{\geq 0}$, with $|b|$ defined as the minimum value of $|\sigma|$, as $\sigma$ ranges over all words in $\langle \sigma_1, \dots, \sigma_{n-1}\rangle$ that represent $b$. Given any real number $x>0$, the set $B_n(x):=\{b\in B_n\mid |b|\leq x\}$ is finite. This is easy to see, since there are only finitely many words in the free group for which the word length is bounded by $x$. Thus, after taking the quotient by the braid relations, there are only finitely many equivalence classes of such words. 

\par Let $\cF_n$ be the family of braids $b$ on $n$ strings that admit a presentation by a word of the form $\sigma_1^{k_1}\dots \sigma_{n-1}^{k_{n-1}}$, where $\underline{k}=(k_1, \dots, k_{n-1})$ is an $(n-1)$ tuple of non-negative integers. We specialize to this special family since we have an explicit description of Alexander polynomials in this natural family. We let $b_{\underline{k}}$ denote the associated braid in $B_n$. Note that $b_{\uk}$ is thus an isotopy class of braids.

\par Given an integer $r$, let $\mathcal{T}_r$ be the the torus link on $2$ strings which is the closed braid associated to $\sigma_1^r$. Note that $\mathcal{T}_r$ is a knot if and only if $r$ is odd. The Alexander polynomial of $\cT_r$ is denoted $f_r(t)$. From the formula \eqref{burau equation}, it is easy to see that $f_r(t)=\frac{1-(-1)^r t^r}{1+t}$. We shall set $F_r(T)$ to denote the completed Alexander polynomial $\widehat{\Delta}_{\cT_r}(T)$.

\begin{proposition}\label{prop delta k alexander}
Let $n\geq 1$ be an integer and let $\underline{k}=(k_1, \dots, k_{n-1})\in \Z^{n-1}$, and let $b_{\underline{k}}$ denote the associated braid. Then, the Alexander polynomial $\Delta_{\underline{k}}(t):=\Delta_{b_{\underline{k}}}(t)$ is given by 
\[\Delta_{\underline{k}}(t)=\prod_{i=1}^{n-1} f_{k_i}(t).\]
\end{proposition}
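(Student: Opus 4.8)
The plan is to reduce the statement to a single determinant over $\Lambda$ via Burau's formula \eqref{burau equation} and then evaluate that determinant by induction on $n$. Writing $M:=\bar{\psi}_n(b_{\underline{k}})=V_1^{k_1}V_2^{k_2}\cdots V_{n-1}^{k_{n-1}}$, equation \eqref{burau equation} gives $\Delta_{\underline{k}}(t)=\tfrac{1-t}{1-t^n}\det\!\big(\op{I}_{n-1}-M\big)$, so it suffices to prove that $\det(\op{I}_{n-1}-M)=\tfrac{1-t^n}{1-t}\prod_{i=1}^{n-1}f_{k_i}(t)$.

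First I would compute the powers $V_i^{k_i}$ in closed form. Each $V_i$ fixes every basis vector $e_j$ with $j\neq i$ and acts nontrivially only through its $i$-th column; an easy induction on the exponent (the coefficients forming a geometric series in $-t$) yields, for interior $i$, that $V_i^{k_i}e_i=t f_{k_i}e_{i-1}+(-t)^{k_i}e_i+f_{k_i}e_{i+1}$, with the evident truncations at $i=1$ and $i=n-1$. This is exactly where $f_r(t)=\tfrac{1-(-t)^r}{1+t}$ enters, and it explains why the relation $1-(-t)^{k_i}=(1+t)f_{k_i}$ will govern the diagonal of $\op{I}_{n-1}-M$.

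The heart of the argument is an induction obtained by peeling off the last factor. Set $\Phi_m:=\det\!\big(\op{I}_{m}-\bar{\psi}_{m+1}(b_{(k_1,\dots,k_m)})\big)$, so that $\Phi_{n-1}=\det(\op{I}_{n-1}-M)$ is the target, and put $M':=V_1^{k_1}\cdots V_{n-2}^{k_{n-2}}$, so $M=M'V_{n-1}^{k_{n-1}}$. Since each $V_i$ with $i\le n-2$ fixes $e_{n-1}$, the matrix $M'$ fixes $e_{n-1}$, and I would verify that its top-left $(n-2)\times(n-2)$ block is precisely $\bar{\psi}_{n-1}(b_{(k_1,\dots,k_{n-2})})$, while its last row leaks only through the factor $V_{n-2}$, equalling $f_{k_{n-2}}e_{n-2}^{\,T}+e_{n-1}^{\,T}$. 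Because $V_{n-1}^{k_{n-1}}$ alters only the last two coordinates, $\op{I}_{n-1}-M$ agrees with $\op{I}_{n-1}-M'$ in its first $n-2$ columns, and its last column is $f_{k_{n-1}}\big((1+t)e_{n-1}-t\,M'e_{n-2}\big)$. A cofactor expansion of $\det(\op{I}_{n-1}-M)$ along this last column and then along the leaked last row, together with multilinearity in the final column, reduces everything to the two smaller determinants $\Phi_{n-2}$ and $\Phi_{n-3}$, yielding the three-term recursion
\[\Phi_{n-1}=f_{k_{n-1}}\big[(1+t)\Phi_{n-2}-t\,f_{k_{n-2}}\Phi_{n-3}\big],\qquad \Phi_0=1,\ \ \Phi_1=(1+t)f_{k_1}.\]

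Finally I would solve this recursion: substituting the induction hypothesis $\Phi_m=(1+t+\cdots+t^{m})\prod_{i=1}^{m}f_{k_i}$ reduces the inductive step to the elementary identity $(1+t)(1+\cdots+t^{n-2})-t(1+\cdots+t^{n-3})=1+\cdots+t^{n-1}$, which gives $\Phi_{n-1}=\tfrac{1-t^n}{1-t}\prod_{i}f_{k_i}$; plugging this into \eqref{burau equation} cancels the prefactor and leaves $\Delta_{\underline{k}}(t)=\prod_i f_{k_i}(t)$. I expect the main obstacle to be the bookkeeping in the inductive step: the reduced Burau matrices of the two end-generators $\sigma_1,\sigma_{n-1}$ have a different shape than the interior ones, so one must carefully confirm that the $(n-1)$-strand Burau matrix reappears as the top-left block of $M'$ and isolate the single leaked entry $f_{k_{n-2}}$ — the matrix $M$ itself is only upper Hessenberg, not tridiagonal, and its entries are genuinely complicated, so the clean recursion is somewhat deceptive. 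As a conceptual check on the answer, the closure of $b_{\underline{k}}$ is the connected sum $\mathcal{T}_{k_1}\#\cdots\#\mathcal{T}_{k_{n-1}}$, and multiplicativity of the Alexander polynomial under connected sum reproduces $\Delta_{\underline{k}}=\prod_i f_{k_i}$; in particular the case $\underline{k}=(1,\dots,1)$, where $f_1=1$ and $\sigma_1\cdots\sigma_{n-1}$ closes to the unknot, gives a quick sanity check that $\det(\op{I}_{n-1}-M)=\tfrac{1-t^n}{1-t}$.
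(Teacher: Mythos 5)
Your argument is correct, but it takes a genuinely different route from the paper's. The paper disposes of this proposition in a few lines by induction on the number of strings: the closure of $b_{\uk}=\sigma_1^{k_1}\cdot(\sigma_2^{k_2}\cdots\sigma_{n-1}^{k_{n-1}})$ is exhibited as the connected sum of the torus link $\cT_{k_1}$ with the closure of $b_{(k_2,\dots,k_{n-1})}$, and multiplicativity of the Alexander polynomial under connected sum finishes the induction --- in other words, what you relegate to a final ``conceptual check'' is the paper's entire proof. Your main argument instead evaluates $\det\big(\op{I}_{n-1}-\bar{\psi}_n(b_{\uk})\big)$ directly, and the key steps all check out: the closed form $V_i^{k}e_i=tf_k e_{i-1}+(-t)^k e_i+f_k e_{i+1}$ follows by induction from $f_{k+1}=f_k+(-t)^k$; each $V_i^{k_i}$ with $i\le n-2$ is block lower triangular for the splitting into coordinates $1,\dots,n-2$ versus coordinate $n-1$, so the top-left block of $M'$ is indeed $\bar{\psi}_{n-1}(b_{(k_1,\dots,k_{n-2})})$ and its last row is $f_{k_{n-2}}e_{n-2}^{\,T}+e_{n-1}^{\,T}$; the last column of $\op{I}_{n-1}-M$ is $f_{k_{n-1}}\big((1+t)e_{n-1}-tM'e_{n-2}\big)$ because $1-(-t)^{k_{n-1}}=(1+t)f_{k_{n-1}}$; and the cofactor expansion yields exactly your three-term recursion, whose solution $\Phi_m=(1+t+\cdots+t^{m})\prod_{i=1}^m f_{k_i}$ is confirmed by the stated polynomial identity (I also verified the $n=3$ case by hand, where the recursion degenerates to $\Phi_2=f_{k_2}[(1+t)\Phi_1-tf_{k_1}\Phi_0]$). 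The trade-off is clear: the paper's proof is much shorter but rests on the geometric identification of the closure as a connected sum and on the standard (but unproved there) multiplicativity of $\Delta$ under connected sum, whereas yours is longer and requires the block-structure bookkeeping you flag, but is entirely self-contained given \eqref{burau equation} and the explicit matrices $V_i$, and works verbatim for arbitrary integer exponents $\uk\in\Z^{n-1}$ as in the statement.
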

\begin{proof}
We prove the result by induction on the number of strings. When $n=2$, there is nothing to prove, and thus assume that $n\geq 3$. We set $\underline{k}':=(k_2, \dots, k_{n-1})$. Let $b_1$ denote the braid on $2$-strings given by $\sigma_1^{k_1}$, and let $b_2$ denote the braid $b_{\underline{k}'}$. Then, it is easy to see that $b_{\uk}$ is the connected sum $b_1\sharp b_2$, where the second string in $b_1$ is added to the first string in $b_2$ so as to preserve orientation. We find that $\Delta_{b_1}(t)=f_{k_1}(t)$ and that $\Delta_{b_2}(t)=\Delta_{\uk'}(t)$. By inductive hypothesis, we find that $\Delta_{\uk'}(t)=\prod_{i=2}^{n-1}f_{k_i}(t)$. By a standard property of Alexander polynomials, $\Delta_{\uk}(t)=\Delta_{b_1\sharp b_2}(t)=\Delta_{b_1}(t)\Delta_{b_2}(t)=\prod_{i=1}^{n-1}f_{k_i}(t)$.
\end{proof}

\begin{proposition}
    Let $\uk=(k_1, \dots, k_{n-1})$ and $b_{\uk}\in \cF_n$ the associated braid, then, $|b_{\uk}|=\sum_{i=1}^{n-1} k_i$.  
\end{proposition}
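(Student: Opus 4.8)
The plan is to prove the two inequalities $|b_{\uk}| \le \sum_{i=1}^{n-1} k_i$ and $|b_{\uk}| \ge \sum_{i=1}^{n-1} k_i$ separately. The upper bound is immediate: by construction $b_{\uk}$ is represented by the word $\sigma_1^{k_1}\cdots\sigma_{n-1}^{k_{n-1}}$, whose length under the counting function is $\sum_{i=1}^{n-1}|k_i| = \sum_{i=1}^{n-1} k_i$, using that each $k_i\ge 0$. Since $|b_{\uk}|$ is by definition the minimum of $|\sigma|$ over all words $\sigma$ representing $b_{\uk}$, this already yields $|b_{\uk}|\le\sum_i k_i$.

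The substance lies in the lower bound. First I would introduce the exponent-sum (or \emph{writhe}) homomorphism $e\colon B_n\to \Z$ characterized by $e(\sigma_i)=1$ for every $i$. To see that this descends from the free group $\langle \sigma_1,\dots,\sigma_{n-1}\rangle$ to $B_n$, I would check that it respects the braid relations: the commutation relation $\sigma_i\sigma_j=\sigma_j\sigma_i$ contributes $1+1$ to the exponent sum on each side, and the relation $\sigma_i\sigma_{i+1}\sigma_i=\sigma_{i+1}\sigma_i\sigma_{i+1}$ contributes $1+1+1$ on each side, so both sides always have equal exponent sum and $e$ is well defined. (Equivalently, this records the standard fact that $B_n^{\op{ab}}\cong\Z$.) Evaluating on the given representative gives $e(b_{\uk})=\sum_{i=1}^{n-1}k_i$.

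Now let $w=\sigma_{i_1}^{a_1}\cdots\sigma_{i_t}^{a_t}$ be any word representing $b_{\uk}$. Applying $e$ and using that $e$ depends only on the class $b_{\uk}$, I get $\sum_{j=1}^t a_j = e(b_{\uk}) = \sum_{i=1}^{n-1}k_i$, while the counting function of $w$ is $|w|=\sum_{j=1}^t|a_j|$. The triangle inequality then gives
\[
|w| \;=\; \sum_{j=1}^t |a_j| \;\ge\; \left|\sum_{j=1}^t a_j\right| \;=\; \Bigl|\sum_{i=1}^{n-1}k_i\Bigr| \;=\; \sum_{i=1}^{n-1}k_i,
\]
where the final equality uses $k_i\ge 0$. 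Taking the minimum over all representing words $w$ yields $|b_{\uk}|\ge\sum_i k_i$, and combining with the upper bound proves equality.

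I do not anticipate a genuine obstacle here: the only point requiring care is the well-definedness of $e$ on $B_n$ (as opposed to on the free group), which is exactly the compatibility with the braid relations verified above; the remainder is the triangle inequality together with the non-negativity of the entries $k_i$.
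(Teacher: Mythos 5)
Your proof is correct, but it takes a genuinely different route from the paper's. For the lower bound the paper invokes the Burau formula \eqref{burau equation}: it bounds $\deg\det\bigl(\op{I}_{n-1}-\bar{\psi}_n(\sigma_{i_1}^{a_1}\cdots\sigma_{i_t}^{a_t})\bigr)$ by the word length, and then computes $\deg\Delta_{\uk}(t)=\sum_i(k_i-1)$ exactly, using the product formula $\Delta_{\uk}(t)=\prod_i f_{k_i}(t)$ from Proposition \ref{prop delta k alexander}; comparing the two expressions forces any representing word to have length at least $\sum_i k_i$. You instead use the exponent-sum homomorphism $e\colon B_n\to\Z$ (i.e., the abelianization $B_n^{\op{ab}}\cong\Z$), whose well-definedness you correctly reduce to the fact that both braid relations preserve exponent sum, together with the triangle inequality $\sum_j|a_j|\ge\bigl|\sum_j a_j\bigr|$. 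Your argument is more elementary and more general: it shows that \emph{any} positive braid word realizes the minimal length of the braid it represents, with no reference to Alexander polynomials or the Burau representation, whereas the paper's argument is tied to the explicit degree computation available for the family $\cF_n$. What the paper's approach buys is that it recycles machinery already established for the main theorem (the degree of $\Delta_{\uk}$), so no new invariant needs to be introduced; what yours buys is independence from Proposition \ref{prop delta k alexander} and a cleaner conceptual reason for the inequality. Both are complete proofs.
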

\begin{proof}
Since $b_{\uk}$ is presented by $\sigma_1^{k_1}\dots \sigma_{n-1}^{k_{n-1}}$, it is clear that $|b_{\uk}|\leq \sum_{i=1}^{n-1} k_i$. On the other hand, if $b_{\uk}$ is represented by $\sigma_{i_1}^{a_1}\dots \sigma_{i_t}^{a_t}$, then by \eqref{burau equation},
\[\begin{split}
&\deg\left(\Delta_{\uk}(t)\right) \\
=&\deg\left(\op{det}\left(\op{I}_{n-1}-\bar{\psi}_n(b_{\uk})\right)\right)-n+1\\
=&\deg\left(\op{det}\left(\op{I}_{n-1}-\bar{\psi}_n(\sigma_{i_1}^{a_1}\dots \sigma_{i_t}^{a_t})\right)\right)-n+1 \\ 
\leq & \left(\sum_{i=1}^{n-1} a_i\right)-n+1.
\end{split}\]
On the other hand, we have that $\Delta_{\uk}(t)=\prod_{i=1}^{n-1} f_{k_i}(t)$ by Proposition \ref{prop delta k alexander}. We note that $\op{deg}f_{k_i}(t)=k_i-1$. Hence, we find that 
\[\deg \Delta_{\uk}(t)=\sum_{i=1}^{n-1} (k_i-1)=\left(\sum_{i=1}^{n-1} k_i\right)-n+1.\] Therefore, we find that 
\[\sum_{i=1}^{n-1} a_i\geq \sum_{i=1}^{n-1} k_i,\]hence, $|b_{\uk}|= \sum_{i=1}^{n-1} k_i$.
\end{proof}
\begin{proposition}\label{prop injectivity}
Given tuples $\uk=(k_1, \dots, k_{n-1})$ and $\uk'=(k_1', \dots, k_{n-1}')$, the braids $b=b_{\uk}$ and $b'=b_{\uk'}$ are isotopic if and only if $\uk=\uk'$. Thus, the association $\uk\mapsto b_{\uk}$ sets up a bijection from $\Z_{\geq 0}^{n-1}$ to $\cF_n$.
\end{proposition}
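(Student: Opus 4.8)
The ``if'' direction is immediate, and surjectivity of $\uk\mapsto b_{\uk}$ onto $\cF_n$ holds by definition, so the content is the ``only if'' direction, i.e. the injectivity of $\uk\mapsto b_{\uk}$. Since the reduced Burau representation $\bar{\psi}_n:B_n\to \op{GL}_{n-1}(\Lambda)$ is a well-defined homomorphism, and hence an isotopy invariant, it suffices to show that the map $\Z_{\geq 0}^{n-1}\to \op{GL}_{n-1}(\Lambda)$ sending $\uk$ to $M(\uk):=V_1^{k_1}V_2^{k_2}\cdots V_{n-1}^{k_{n-1}}$ is injective; here $V_i=\bar{\psi}_n(\sigma_i)$ are the explicit matrices recorded above. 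The plan is to read the exponents $k_1,\dots,k_{n-1}$ off the matrix $M(\uk)$ one at a time, by repeatedly stripping the leftmost factor.

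The computational input is a single observation about the generators. Writing $e_1,\dots,e_{n-1}$ for the standard $\Lambda$-basis of $\Lambda^{n-1}$ and setting $e_0=e_n=0$, inspection of the matrices $V_1$, $V_{n-1}$, and $V_i$ (for $2\le i\le n-2$) shows that, uniformly, $V_i e_j=e_j$ for $j\neq i$ and $V_i e_i=t\,e_{i-1}-t\,e_i+e_{i+1}$. Consequently $V_i^{k}$ again fixes $e_j$ for every $j\neq i$, and a one-line induction (the coefficient of $e_i$ gets multiplied by $-t$ at each step) gives $(V_i^{k})_{ii}=(-t)^{k}$. In particular $(-t)^{k}=1$ precisely when $k=0$, and otherwise $(-t)^{k}$ has $t$-degree $k$; so the $(i,i)$-entry of $V_i^{k}$ determines the non-negative integer $k$.

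I would then recover the exponents inductively. Since $V_2^{k_2},\dots,V_{n-1}^{k_{n-1}}$ all fix $e_1$, the first column of $M(\uk)$ equals $V_1^{k_1}e_1$, whence $M(\uk)_{11}=(-t)^{k_1}$ and $k_1$ is determined. Assuming $k_1,\dots,k_{m-1}$ have been recovered, form the matrix $M^{(m)}:=V_{m-1}^{-k_{m-1}}\cdots V_1^{-k_1}\,M(\uk)=V_m^{k_m}V_{m+1}^{k_{m+1}}\cdots V_{n-1}^{k_{n-1}}$, which is computable from $M(\uk)$ together with the already-known exponents. Since $V_{m+1}^{k_{m+1}},\dots,V_{n-1}^{k_{n-1}}$ all fix $e_m$, we get $M^{(m)}e_m=V_m^{k_m}e_m$, so $M^{(m)}_{mm}=(-t)^{k_m}$ and $k_m$ is determined as above. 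This recovers the full tuple $\uk$ from $M(\uk)$, which yields injectivity and hence the asserted bijection.

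The only real subtlety to watch is the presence of zero exponents: an entry $k_i=0$ makes $\sigma_i^{k_i}$ disappear and causes cruder invariants to lose information — for instance the Alexander polynomial $\prod_i f_{k_i}(t)$ of Proposition \ref{prop delta k alexander} records only the multiset $\{k_i\}$, and reading a single fixed row of $M(\uk)$ breaks off at the first vanishing exponent. The left-to-right peeling above sidesteps this, because the diagonal entry $(-t)^{k_m}$ detects $k_m=0$ (it equals $1$) exactly as faithfully as it detects a positive value, so no information is lost at the zeros and the recovery procedure goes through uniformly.
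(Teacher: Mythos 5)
Your proof is correct and follows essentially the same strategy as the paper's: apply a Burau-type representation, exploit the fact that the generators $\sigma_{m+1},\dots,\sigma_{n-1}$ act trivially on the $m$-th coordinate so that the $(m,m)$ entry of the product isolates the factor $\sigma_m^{k_m}$, and recover $k_m$ from that diagonal entry. The only differences are cosmetic: the paper uses the unreduced representation $\psi_n$ and derives a contradiction from a degree count on the $(j,j)$ entry of $U_j^a$ at the first index $j$ where the tuples differ, whereas you use the reduced representation $\bar{\psi}_n$ and recover all exponents directly via the exact formula $(V_m^{k})_{mm}=(-t)^{k}$.
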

\begin{proof}
Suppose that $b_{\uk}$ and $b_{\uk'}$ are isotopic for $\uk$ and $\uk'$. Assume by way of contradiction that $\uk\neq \uk'$. Then, we find that $\psi_n(b_{\uk})=\psi_n(b_{\uk'})$. Recall that $U_i\in \op{GL}_{n-1}(\Lambda)$ is the matrix $\psi_n(\sigma_i)$, we find that
\[U_1^{k_1}\dots U_{n-1}^{k_{n-1}}=U_1^{k_1'}\dots U_{n-1}^{k_{n-1}'}.\]Let $j$ be the minimum value for which $k_j\neq k_j'$. Thus, $U_j^a\in \langle U_{j+1}, \dots, U_{n-1} \rangle $, where $a=k_j'-k_j$. Assume without loss of generality that $a>0$.

\par Recall that $U_j$ is given by  \[U_j:=\left( {\begin{array}{cccc}
 \op{I}_{j-1} & 0 & 0 & 0 \\
 0 & (1-t) & t & 0 \\
 0 & 1 & 0 & 0 \\
  0 & 0 & 0 & \op{I}_{n-j-1}, \\
 \end{array} } \right)\]
 and it is easy to see that the entry of $U_j^a$ in the $(j,j)$ position is a polynomial in $t$ of degree $a$. On the other hand, the entry in the $(j,j)$ position of any matrix in $\langle U_{j+1}, \dots, U_{n-1}\rangle$ is the constant polynomial $1$. Hence, $a=0$, which is a contradiction. We conclude that $\uk=\uk'$.
\end{proof}
\par Given a subset $\cF$ of $B_n$, and $x>0$, set 
\[\cF(x):=\left\{b\in \cF\mid |b|\leq x\right\}.\] Note that $\cF(x)=\cF\cap B_n(x)$, and is finite since $B_n(x)$ is a finite set. Given a subset $\cF'$ of $\cF_n$, the relative density of $\cF'$, is defined to be the following limit (provided the limit exists)
\[\mathfrak{d}(\cF'):=\lim_{x\rightarrow \infty} \frac{\# \cF'(x)}{\#\cF_n(x)}.\]
Given a function $H:\cF_n\rightarrow \mathbb{R}_{\geq 0}$, the \emph{average} value of $H$ on $\cF_n$ is defined as follows (provided the limit exists)
\[\op{Av}_H(\cF_n):=\lim_{x\rightarrow \infty} \left(\frac{1}{\# \cF_n(x)}\sum_{b\in \cF_n(x)} H(b)\right).\]
It follows from Proposition \ref{prop injectivity} that $\cF_n(x)$ can be identified with the set \[\{\uk=(k_1, \dots, k_{n-1})\in \Z_{\geq 0}^{n-1}\mid \sum_i k_i\leq x\},\] which is the set of lattice points in the finite polytope \begin{equation}\label{def of Px} P_x:= \left\{(z_1,\dots, z_{n-1})\in \mathbb{R}_{\geq 0}^{n-1}\mid \text{ such that }\sum_i z_i\leq x\right\}.\end{equation}

\par Given $\uk\in \Z_{\geq 0}^{n-1}$, we set $\mu_{p,\uk}$ and $\lambda_{p,\uk}$ to denote the invariants $\mu$ and $\lambda$-invariants of the link associated to the braid $b_{\uk}$.

\begin{lemma}\label{mu=0 lemma}
Let $n\geq 2$, and $\uk=(k_1, \dots, k_{n-1})\in \Z_{\geq 0}^{n-1}$. Then the following assertions hold
\begin{enumerate}
\item\label{p1 of mu=0 lemma} the Alexander polynomial $\Delta_{\uk}(t)$ is non-zero and $\mu_{\uk}=0$.
\item\label{p2 of mu=0 lemma} Setting $e(\uk):= \#\{k_i\mid k_i\text{ is even}\}$, we have that $\lambda_{\uk}\geq e(\uk)$. Furthermore, suppose that $p\nmid k_i$ for all $k_i$ that are even, then, $\lambda_{\uk}=e(\uk)$.
\end{enumerate}
\end{lemma}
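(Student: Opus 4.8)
The plan is to reduce everything to the one-variable factors $F_r(T):=f_r(1+T)$ and then exploit multiplicativity. By Proposition \ref{prop delta k alexander} we have $\Delta_{\uk}(t)=\prod_{i=1}^{n-1}f_{k_i}(t)$, so after the substitution $t\mapsto 1+T$,
\[
\widehat{\Delta}_{\uk}(T)=\prod_{i=1}^{n-1}F_{k_i}(T).
\]
Here $\mu$ and $\lambda$ are additive under multiplication (via Weierstrass preparation), and $\op{ord}_{T=0}$ is additive because $\Z_p\llbracket T\rrbracket$ is a domain; hence the computation of $\mu_{\uk}$ and $\lambda_{\uk}$ reduces to understanding each factor $F_{k_i}$ in isolation. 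The content lies in the case where all $k_i\geq 1$, so that every $f_{k_i}$ is a genuine nonzero polynomial; when some $k_i=0$ one has $f_0=0$ and the stated conventions apply (for part (2) the equality hypothesis already excludes $k_i=0$, since $p\mid 0$).

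For part (1), I would read off from $f_r(t)=\frac{1-(-1)^r t^r}{1+t}$ that, for $r\geq 1$, $f_r$ is a polynomial of degree $r-1$ whose leading coefficient is $\pm 1$. The substitution $t\mapsto 1+T$ preserves both degree and leading coefficient, so $\widehat{\Delta}_{\uk}(T)$ is a nonzero polynomial whose top coefficient is $\prod_i(\pm 1)=\pm 1$, a unit in $\Z_p$. This gives simultaneously $\Delta_{\uk}(t)\neq 0$ and $\mu_{\uk}=0$.

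For part (2) the key is the low-order behaviour of $F_r$ at $T=0$, organized by the parity of $r$. If $r$ is odd then $f_r(t)=\frac{1+t^r}{1+t}$ satisfies $f_r(1)=1$, so $F_r(0)=1$ and $\op{ord}_{T=0}F_r=0$. If $r$ is even then $f_r(t)=(1-t)\bigl(1+t^2+\dots+t^{r-2}\bigr)$, whence
\[
F_r(T)=-T\sum_{j=0}^{r/2-1}(1+T)^{2j},
\]
so that $\op{ord}_{T=0}F_r=1$ and the coefficient of $T$ in $F_r$ equals $-r/2$. Summing orders over $i$ yields $\op{ord}_{T=0}\widehat{\Delta}_{\uk}=e(\uk)$, and since $\lambda_{\uk}\geq \op{ord}_{T=0}\widehat{\Delta}_{\uk}$ (as recalled in the excerpt) we obtain $\lambda_{\uk}\geq e(\uk)$. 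To pin down equality I would compute the lowest-order coefficient of the product: the coefficient of $T^{e(\uk)}$ in $\widehat{\Delta}_{\uk}$ is $\prod_{k_i\text{ even}}(-k_i/2)=(-1)^{e(\uk)}\prod_{k_i\text{ even}}(k_i/2)$. By the criterion in the excerpt, $\lambda_{\uk}=\op{ord}_{T=0}\widehat{\Delta}_{\uk}=e(\uk)$ exactly when this coefficient is a $p$-adic unit; because $p$ is odd this holds precisely when $p\nmid k_i$ for every even $k_i$, which is the asserted hypothesis.

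Each individual computation is elementary, so the only genuinely delicate point is bookkeeping: factoring $f_r$ correctly through the cyclotomic identity and then tracking the lowest-order Taylor coefficient of $\prod_i F_{k_i}$ at $T=0$, checking that no cancellation drops $\op{ord}_{T=0}$ below $e(\uk)$. This cannot happen, since each even factor contributes an honest $T^1$ with nonzero integer coefficient $-k_i/2$. The additivity of $\mu$ and $\lambda$ under multiplication, though standard, is what makes the reduction to single factors legitimate and should be invoked explicitly.
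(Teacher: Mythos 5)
Your proof is correct and takes essentially the same route as the paper's: both write $\widehat{\Delta}_{\uk}(T)=\prod_i F_{k_i}(T)$, get $\mu_{\uk}=0$ from the unit leading coefficient, bound $\lambda_{\uk}\geq e(\uk)$ by the order of vanishing at $T=0$, and settle equality by showing the coefficient of $T^{e(\uk)}$ is a unit times $\prod_{k_i\text{ even}}k_i$ (you compute $-k_i/2$ per even factor where the paper, after clearing the unit $2+T$, gets $-k_i$; these agree since $p$ is odd). Your explicit flagging of the degenerate case $k_i=0$ (where $f_0=0$) is more careful than the paper, which silently ignores it, though "the stated conventions apply" does not rescue part (1) there, since the assertion $\Delta_{\uk}(t)\neq 0$ genuinely requires all $k_i\geq 1$.
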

\begin{proof}
We find that \[\begin{split}  \widehat{\Delta}_{\uk}(T)= & \prod_{i=1}^{n-1}F_{k_i}(T)\\ 
= & \frac{\prod_{i=1}^{n-1}\left(1-(-1)^{k_i}(1+T)^{k_i}\right)}{(2+T)^{n-1}} \\
= & \frac{\prod_{i=1}^{n-1}\left((1-(-1)^{k_i})-(-1)^{k_i}\sum_{j=1}^{k_i} \binom{k_i}{j} T^j\right)}{(2+T)^{n-1}} .\end{split}\]
It is easy to see that $\widehat{\Delta}_{\uk}(T)$ is a monic polynomial. Note that $p^{\mu_{\uk}}$ divides all coefficients of $\widehat{\Delta}_{\uk}(T)$ and hence, $\mu_{\uk}=0$. This proves \eqref{p1 of mu=0 lemma}.

\par Note that $T$ divides $F_{k_i}(T)$ if and only if $k_i$ is even. On the other hand, $T$ does not divide $(2+T)$. We thus find that $T^{e({\uk})}$ divides $\widehat{\Delta}_{\uk}(T)$. Write $\widehat{\Delta}_{\uk}(T)=u(T) g(T)$, where, $g(T)$ is a distinguished polynomial and $u(T)$ is a unit in $\Lambda$. Note that $\lambda_{\uk}$ is a the degree of $g(T)$, and $g(T)$ is divisible by $T^e$ and therefore, $\lambda_{\uk}\geq e$. 
\par Since $p$ is odd, we find that $(2+T)$ is a unit in $\widehat{\Lambda}$, and hence, up to a unit in $\widehat{\Lambda}$, $\widehat{\Delta}_{\uk}(T)$ is equal to \[\begin{split}& \prod_{i=1}^{n-1}\left((1-(-1)^{k_i})-(-1)^{k_i}\sum_{j=1}^{k_i} \binom{k_i}{j} T^j\right).\end{split}\]
If $k_i$ is odd, the constant term of $(1-(-1)^{k_i})-(-1)^{k_i}\sum_{j=1}^{k_i} \binom{k_i}{j} T^j$ is $2$, which is a unit in $\Z_p$. If $k_i$ is even, we find that 
\[(1-(-1)^{k_i})-(-1)^{k_i}\sum_{j=1}^{k_i} \binom{k_i}{j} T^j=-k_i T+\text{higher order terms}.\]
Therefore, 
\[\widehat{\Delta}_{\uk}(T)=u a T^e+\text{ higher order terms},\] where $u$ is a unit in $\Z_p$, and $a$ is the product of all $k_i$ such that $k_i$ is even. Therefore, by the discussion following \eqref{lambda min}, we have that $\lambda_{\uk}=e$ if and only if $p\nmid a$, i.e., if and only if $p\nmid k_i$ for all $k_i$ that are even. This completes the proof of \eqref{p2 of mu=0 lemma}.
\end{proof}
According to Lemma \ref{mu=0 lemma}, the $\mu$-invariant is equal to $0$ in the family of braids $\cF_n$. We study the distribution of the $\lambda$-invariant. In greater detail, for $\lambda\in \Z_{\geq 0}$, we study the limit
\begin{equation}\label{definition of d_n}\mathfrak{d}_n(\lambda):=\lim_{x\rightarrow \infty}\left(\frac{\#\left\{ \uk=(k_1, \dots, k_{n-1})\in \Z_{\geq 0}^{n-1}\mid \sum_i k_i\leq x\text{ and }\lambda_{\uk}=\lambda\right\}}{\#\left\{\uk=(k_1, \dots, k_{n-1})\in \Z_{\geq 0}^{n-1}\mid \sum_i k_i\leq x\right\}}\right).\end{equation}

\par In particular, we shall show that the above limit exists for each $\lambda\in \Z_{\geq 0}$. Let $H_r(T)$ denote the polynomial in $\widehat{\Lambda}$ given by \[ \begin{split} H_r(T):=& (2+T)F_r(T)=1-(-1)^r(1+T)^r \\ = & (1-(-1)^r)-(-1)^r\sum_{i=1}^r \binom{r}{i} T^i. \end{split}\]
Given a power series $H(T)\in \widehat{\Lambda}$, we set $\lambda(H)$ to denote the $\lambda$-invariant of $H$. Note that since $p$ is odd, $\lambda(F_r)=\lambda(H_r)$, since $(2+T)$ is a unit in $\widehat{\Lambda}$. Since $\widehat{\Delta}_{\uk}(T)=\prod_{i=1}^{n-1}F_{k_i}(T)$, we find that $\lambda_{\uk} = \sum_{i=1}^{n-1}\lambda(F_{k_i})$. We start by computing the following limits 
\[\begin{split}
    & \mathfrak{d}_ \lambda(F_\ast):=\lim_{x\rightarrow \infty} \frac{\#\left\{r\mid r\leq x, \lambda(F_r)=\lambda \right\}}{\#\left\{r\mid r\leq x\right\}}.
\end{split}\]

\begin{lemma}\label{lambda invariant of F_r} Suppose that $r$ is odd, then, $\lambda_p(F_r)=0$. On the other hand, if $r$ is even, write $r=p^{v_p(r)}r'$, we have that $\lambda_p(F_r)=p^{v_p(r)}$.
\end{lemma}
\begin{proof}
Note that since $p$ is odd, $\lambda(F_r)=\lambda(H_r)$. The constant coefficient of $H_r$ is given by $(1-(-1)^r)$. Suppose that $r$ is odd. Then, the constant coefficient of $H_r(T)$ is $2$, which is a $p$-adic unit. Therefore, $H_r(T)$ is a unit in the formal power series ring $\widehat{\Lambda}$, and therefore, $\lambda_p(F_r)=0$ when $r$ is odd. 

\par Next, suppose that $r$ is even. Then, the constant coefficient of $H_r(T)$ is zero, hence $\lambda(H_r)\geq 1$. Set $v=v_p(r)$, we have that $r=p^v r'$, where, $r'$ is even and coprime to $p$. We find that 
\[H_r(T)=1-(1+T)^r\equiv 1-(1+T^{p^v})^{r'}\equiv -r'T^{p^v}-\binom{r'}{2}T^{2p^v}+\dots \mod{p}.\]
Since $r'$ is not divisible by $p$, we find that $\lambda_p(H_r)=p^v$.

\end{proof}

\begin{proposition}\label{density propn}
The density $\mathfrak{d}_\lambda(F_\ast)$ is given as follows
\[\mathfrak{d}_\lambda(F_\ast)=\begin{cases}
\frac{1}{2}&\text{ if } \lambda=0,\\
\frac{1}{2p^i}\left(1-\frac{1}{p}\right)&\text{ if }\lambda=p^i,\\
0 & \text{ otherwise.}
\end{cases}\]
\end{proposition}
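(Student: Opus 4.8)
The plan is to translate the question into a purely arithmetic counting problem by invoking Lemma \ref{lambda invariant of F_r}, which expresses $\lambda(F_r)$ entirely in terms of the parity of $r$ and the $p$-adic valuation $v_p(r)$: one has $\lambda(F_r)=0$ precisely when $r$ is odd, and $\lambda(F_r)=p^{v_p(r)}$ when $r$ is even. Since the limit defining $\mathfrak{d}_\lambda(F_\ast)$ is a natural density over the integers $r\leq x$, the computation reduces to determining the natural densities of three explicitly described sets of integers, organized according to the value of $\lambda$.

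First I would treat the case $\lambda=0$. By the lemma, $\lambda(F_r)=0$ if and only if $r$ is odd, and the set of odd positive integers has natural density $\tfrac12$, giving $\mathfrak{d}_0(F_\ast)=\tfrac12$. Next, for $\lambda=p^i$ with $i\geq 0$, the lemma shows that $\lambda(F_r)=p^i$ holds exactly when $r$ is even and $v_p(r)=i$. I would count these by writing $r=p^i s$ with $p\nmid s$; since $p$ is odd, $p^i$ is odd, so $r$ is even if and only if $s$ is even. Thus the relevant integers correspond bijectively, via $r\mapsto s=r/p^i$, to integers $s\leq x/p^i$ that are both even and coprime to $p$. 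The condition that $s$ be even depends only on $s\bmod 2$ and the condition $p\nmid s$ only on $s\bmod p$; as $\gcd(2,p)=1$, the Chinese Remainder Theorem renders these conditions independent, so the set of such $s$ has density $\tfrac12\bigl(1-\tfrac1p\bigr)$. Rescaling by $p^i$ (which contracts densities by the factor $p^{-i}$) then yields $\mathfrak{d}_{p^i}(F_\ast)=\tfrac{1}{2p^i}\bigl(1-\tfrac1p\bigr)$. Finally, for any $\lambda\geq 1$ that is not a power of $p$, Lemma \ref{lambda invariant of F_r} forces $\lambda(F_r)$ to be either $0$ or a power of $p$ for every $r$, so the defining set is empty and $\mathfrak{d}_\lambda(F_\ast)=0$.

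The one point requiring genuine care — and the main technical step — is verifying that the natural densities exist and take the stated values. Here I would use that $\{r:v_p(r)=i\}=p^i\Z\setminus p^{i+1}\Z$ is a finite union of residue classes modulo $p^{i+1}$, and that intersecting with a parity condition keeps it a finite union of residue classes modulo $2p^{i+1}$; the density of such a set is then computed by counting qualifying residues and dividing, and the error incurred by the floor functions in $\#\{r\leq x:\dots\}$ is bounded by a constant independent of $x$, hence vanishes in the limit. As a consistency check confirming that the three cases exhaust all $r$, summing the densities over $\lambda\in\{0\}\cup\{p^i:i\geq 0\}$ gives $\tfrac12+\tfrac12\bigl(1-\tfrac1p\bigr)\sum_{i\geq 0}p^{-i}=\tfrac12+\tfrac12=1$.
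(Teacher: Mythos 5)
Your proof is correct and takes essentially the same route as the paper: both reduce the statement to Lemma \ref{lambda invariant of F_r} and then compute the natural density of a finite union of residue classes modulo $2p^{i+1}$ (the paper enumerates the $p-1$ classes $r\equiv 2ap^i \pmod{2p^{i+1}}$ directly, while you arrive at the same count via the substitution $r=p^is$ and the Chinese Remainder Theorem). Your additional remarks on the existence of the densities and the consistency check that the densities sum to $1$ are correct but not needed beyond what the paper records.
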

\begin{proof}
It follows from Lemma \ref{lambda invariant of F_r} that either $\lambda_p(F_r)=0$, or is a power of $p$. Therefore, if $\lambda\notin \{0\}\cup \{p^i\mid i\geq 0\}$, then, $\mathfrak{d}_{\lambda}(F_\ast)=0$. Furthermore, $\lambda_p(F_r)=0$ if and only if $r$ is odd, and thus,
\[\mathfrak{d}_0(F_\ast)=\lim_{x\rightarrow \infty} \frac{\#\left\{r\mid r\leq x, r\text{ is odd} \right\}}{\#\left\{r\mid r\leq x\right\}}=\frac{1}{2}. \]
It follows from Lemma \ref{lambda invariant of F_r} that $\lambda_p(F_r)=p^i$ if and only if $r\equiv 2ap^i\mod{2p^{i+1}}$, where $a\in \{1, \dots, p-1\}$. Thus, there are $(p-1)$ distinct congruence classes modulo $2p^{i+1}$. We find that
\[\begin{split}\mathfrak{d}_{p^i}(F_\ast)=& \sum_{a=1}^{p-1}  \lim_{x\rightarrow \infty} \frac{\#\left\{r\mid r\leq x, r\equiv 2ap^i\mod{p^{i+1}} \right\}}{\#\left\{r\mid r\leq x\right\}} \\
= & \sum_{a=1}^{p-1} \frac{1}{2p^{i+1}}=\frac{1}{2p^i}\left(1-\frac{1}{p}\right).\end{split}\]
\end{proof}
\par Let $\ulambda=(\lambda_1,\dots, \lambda_{n-1})$ be a tuple of non-negative integers. Set $\mathfrak{d}(\ulambda)$ denote the limit defined as follows
\[\mathfrak{d}(\ulambda):=\lim_{x\rightarrow \infty}\left(\frac{\#\left\{ \uk=(k_1, \dots, k_{n-1})\in \Z_{\geq 0}^{n-1}\mid \sum_i k_i\leq x\text{ and }\lambda(F_{k_i})=\lambda_i \text{ for }1\leq i\leq n-1\right\}}{\#\left\{\uk=(k_1, \dots, k_{n-1})\in \Z_{\geq 0}^{n-1}\mid \sum_i k_i\leq x\right\}}\right).\]

 We shall in due course evaluate the above limits for all choices of $\ulambda\in \Z_{\geq 0}^{n-1}$. However, we first relate the above limits to $\mathfrak{d}_n(\lambda)$. We recall that, given an integer $\lambda\in \Z_{\geq 0}$, a composition of $\lambda$ into $(n-1)$ parts is a tuple of non-negative integers $\ulambda=(\lambda_1, \dots, \lambda_{n-1})$ such that $\sum_{i=1}^{n-1}\lambda_i=\ulambda$. 
 
 \begin{definition}\label{def A}
 Set $\mathcal{A}_{(n-1),\lambda}$ to be the set of all compositions $\ulambda=(\lambda_1, \dots, \lambda_{n-1})$ of $\lambda$ such that each entry $\lambda_i$ is either $0,1$ or a positive power of $p$.
 \end{definition} It follows from Lemma \ref{lambda invariant of F_r} that if $\lambda_i$ is not in the set $\{0\}\cup \{p^i\mid i\geq 0\}$, for all entries $1\leq i\leq n-1$, then, $\mathfrak{d}(\ulambda)=0$. This is because the only values that $\lambda(F_{k_i})$ can take must be in the set $\{0\}\cup \{p^i\mid i\geq 0\}$. It is easy to see that the following relation holds
 \begin{equation}\label{boring equation}
\mathfrak{d}_n(\lambda)=\sum_{\ulambda\in \mathcal{A}_{(n-1), \lambda}} \mathfrak{d}(\ulambda).
 \end{equation}

 Therefore, in order to evaluate $\mathfrak{d}_n(\lambda)$, it suffices to evaluate $\mathfrak{d}(\ulambda)$ for all $\ulambda\in \mathcal{A}_{(n-1), \lambda}$. In the following discussion, the tuple $\ulambda$ is fixed. Given a region $D$ in $\mathbb{R}_{\geq 0}^{n-1}$, set \[N_{\underline{\lambda}}(D):=\#\{\uk=(k_1, \dots, k_{n-1})\in D\cap(\Z_{\geq 0}^{n-1})\mid \lambda(F_{k_i})=\lambda_i\text{ for all }i\}.\] Given a point $z=(z_1, \dots, z_{n-1})\in \Z_{\geq 0}^{n-1}$, we set $R_z$ to denote the rectangle defined by
\begin{equation}\label{def of Rz}R_z=\left\{(z_1+a_1, \dots, z_i+a_i,\dots,  z_{n-1}+a_{n-1})\mid 0\leq a_i < p^{\lambda_i+1}\text{ for }i=1, \dots, n-1\right\}.\end{equation}
Although this is suppressed in our notation, the definition of $R_z$ depends on the choice of $\ulambda$. 
\begin{lemma}\label{N lambda lemma}
With respect to notation above, we have that \[N_{\underline{\lambda}}(R_z)=\op{Vol}(R_z) \prod_{i} \mathfrak{d}_{\lambda_i}(F_\ast).\]
\end{lemma}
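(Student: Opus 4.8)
The plan is to exploit the fact that the condition defining $N_{\underline{\lambda}}(R_z)$ is \emph{separable}: a lattice point $\uk=(k_1,\dots,k_{n-1})$ is counted precisely when $\lambda(F_{k_i})=\lambda_i$ holds for each $i$ individually, and the $i$-th condition constrains only the coordinate $k_i$. Since $R_z$ is a product of intervals $I_i:=[z_i,z_i+p^{\lambda_i+1})$, the count factors as
\[N_{\underline{\lambda}}(R_z)=\prod_{i=1}^{n-1}\#\{k\in I_i\cap\Z\mid \lambda(F_k)=\lambda_i\}.\]
Because $\op{Vol}(R_z)=\prod_i p^{\lambda_i+1}$ and the target expression is itself a product, it suffices to establish the one–variable identity
\[\#\{k\in I_i\cap\Z\mid \lambda(F_k)=\lambda_i\}=p^{\lambda_i+1}\,\mathfrak{d}_{\lambda_i}(F_\ast)\]
for each $i$, and then multiply. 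I would first record that, the box being an integer half-open rectangle, $\op{Vol}(R_z)$ coincides with its number of lattice points, so there is no slippage between the volume and the lattice-point count.

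The engine for the one–variable identity is that, by Lemma \ref{lambda invariant of F_r}, membership of $k$ in the set $S_{\lambda_i}:=\{k\mid \lambda(F_k)=\lambda_i\}$ depends only on the residue of $k$ modulo a fixed period $P_i$. Concretely, when $\lambda_i=0$ the set $S_0$ is the set of odd integers, a union of residue classes modulo $P_i=2$; and when $\lambda_i=p^j$, the proof of Proposition \ref{density propn} identifies $S_{p^j}$ with the $p-1$ residue classes $2cp^j \pmod{2p^{j+1}}$, $c\in\{1,\dots,p-1\}$, so that $S_{p^j}$ is a union of residue classes modulo $P_i=2p^{j+1}$. In either case the density computed in Proposition \ref{density propn} is exactly $\mathfrak{d}_{\lambda_i}(F_\ast)=(\#\{\text{admissible residues mod }P_i\})/P_i$.

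The key step—and the one requiring genuine care—is that the edge length of $R_z$ in the $i$-th direction must be an integer multiple of the period $P_i$. Granting this, the number of $k\in I_i$ lying in the prescribed residue classes is \emph{exactly} $(\text{length of }I_i)/P_i$ times the number of admissible residues in one period, with no boundary error precisely because $I_i$ spans a whole number of periods; this equals $(\text{length of }I_i)\cdot \mathfrak{d}_{\lambda_i}(F_\ast)$, yielding the one–variable identity and, after the product over $i$, the lemma. I expect the main obstacle to be exactly this divisibility bookkeeping: one must confirm that the side length is a multiple of $P_i$ in each regime ($P_i=2$ when $\lambda_i=0$, and $P_i=2p^{j+1}$ when $\lambda_i=p^j$), since only then does the per-box count equal volume times density on the nose rather than up to a bounded error. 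Once this divisibility is verified, the separability and periodicity reductions make the remainder of the argument purely formal.
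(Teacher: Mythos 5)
Your reduction by separability is the right move, and you have correctly isolated the crux: whether the edge length $p^{\lambda_i+1}$ of $R_z$ is an integer multiple of the period $P_i$ governing the condition $\lambda(F_k)=\lambda_i$. But you leave that divisibility as something ``to be confirmed,'' and it in fact fails. By Lemma \ref{lambda invariant of F_r} the relevant period is $P_i=2$ when $\lambda_i=0$ and $P_i=2p^{j+1}$ when $\lambda_i=p^j$, so $P_i$ is always \emph{even}, whereas $p^{\lambda_i+1}$ is a power of the odd prime $p$ and hence odd. The interval $I_i$ therefore never spans a whole number of periods, and your ``no boundary error'' conclusion does not follow.

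Moreover, the gap cannot be closed along this route, because when some $\lambda_i=0$ the one-variable identity you need is genuinely false: $[z_i,z_i+p)$ contains $p$ consecutive integers, hence $(p+1)/2$ or $(p-1)/2$ odd ones according to the parity of $z_i$ (both parities occur as $z_i$ runs over multiples of $p$), and neither equals $p\cdot\mathfrak{d}_0(F_\ast)=p/2$, which is not even an integer. (For $\lambda_i=p^j$ the count does turn out to be exact, because $z_i\equiv 0$ or $p^{j+1}\pmod{2p^{j+1}}$ and each half-period window contains exactly $(p-1)/2$ of the $p-1$ admissible residues $2p^j,4p^j,\dots,2(p-1)p^j$ --- but that is a computation your sketch does not perform, and it does not help the $\lambda_i=0$ coordinates.) The paper itself omits the proof, and as literally stated the lemma's right-hand side need not be an integer, so no proof of the exact identity can exist. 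To salvage your strategy, either enlarge the boxes to side length $2p^{\lambda_i+1}$, so each edge is a whole number of periods and your argument closes cleanly (the tiling argument in Lemma \ref{lemma product lambda} goes through verbatim with the larger boxes), or prove the count only up to a bounded error per box and then check separately that these errors average to zero over $z$; a crude $O(1)$ error per box is not enough downstream, since $\op{Vol}(R_z)$ is itself a constant independent of $x$.
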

\begin{proof}
The proof of the above result is similar to that of Proposition \ref{density propn}, and is omitted.
\end{proof}

\begin{lemma}\label{lemma product lambda}
Let $n\geq 2$ and $\ulambda=(\lambda_1, \dots, \lambda_{n-1})\in \Z_{\geq 0}^{n-1}$. Then, the following relation holds
\[\mathfrak{d}(\ulambda)=\prod_{i=1}^{n-1} \mathfrak{d}_{\lambda_i}(F_\ast).\]
\end{lemma}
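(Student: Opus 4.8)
The plan is to use the exact count over a single fundamental rectangle furnished by Lemma~\ref{N lambda lemma} and to sum it across a tiling of the simplex $P_x$ by translates of such rectangles, while controlling the error contributed by the rectangles that straddle the boundary hyperplane $\sum_i z_i=x$. Throughout, $\ulambda$ is fixed and I write $m_i:=p^{\lambda_i+1}$ for the side length of $R_z$ in the $i$-th coordinate direction. The content of Lemma~\ref{N lambda lemma} is that, because the constraint $\lambda(F_{k_i})=\lambda_i$ depends on $k_i$ only through a fixed modulus dividing $m_i$ (Proposition~\ref{density propn}), every full rectangle $R_z$ contains exactly $\op{Vol}(R_z)\prod_{i}\mathfrak{d}_{\lambda_i}(F_\ast)$ of the points being counted, where $\op{Vol}(R_z)=\prod_i m_i$ is also its number of lattice points.

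First I would partition $\mathbb{R}_{\geq 0}^{n-1}$ into the rectangles $R_z$ as $z$ ranges over the sublattice $\bigl(\prod_{i=1}^{n-1}m_i\Z\bigr)\cap\mathbb{R}_{\geq 0}^{n-1}$; these tile the positive orthant, and since $N_{\ulambda}$ is additive over a disjoint union of lattice regions, I may split the count according to whether a rectangle is \emph{interior} (i.e.\ $R_z\subseteq P_x$) or \emph{boundary} (i.e.\ $R_z$ meets both $P_x$ and its complement). Applying Lemma~\ref{N lambda lemma} to each interior rectangle yields
\[
N_{\ulambda}(P_x)=\Bigl(\prod_{i=1}^{n-1}\mathfrak{d}_{\lambda_i}(F_\ast)\Bigr)\sum_{R_z\ \mathrm{interior}}\op{Vol}(R_z)+E(x),
\]
where $E(x)\geq 0$ collects the contributions of lattice points lying in boundary rectangles.

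The main obstacle is bounding $E(x)$ and showing that the interior rectangles exhaust almost all of $P_x$. A rectangle $R_z$ can be a boundary rectangle only if its base point $z$ lies within distance $\sum_i m_i$ --- a constant independent of $x$ --- of the hyperplane $\sum_i z_i=x$. Inside $P_x$ this is a slab of constant thickness around a codimension-one simplex of side $O(x)$, so the number of sublattice points it contains is $O(x^{n-2})$; as each boundary rectangle contributes at most $\prod_i m_i$ lattice points, we get $E(x)=O(x^{n-2})$. The same slab estimate shows that the interior rectangles fill $P_x$ up to volume $O(x^{n-2})$, so that
\[
\sum_{R_z\ \mathrm{interior}}\op{Vol}(R_z)=\op{Vol}(P_x)+O(x^{n-2})=\frac{x^{n-1}}{(n-1)!}+O(x^{n-2}).
\]

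Finally I would combine these with the standard lattice-point asymptotic for the denominator, $\#\bigl(P_x\cap\Z_{\geq 0}^{n-1}\bigr)=\op{Vol}(P_x)+O(x^{n-2})=\frac{x^{n-1}}{(n-1)!}+O(x^{n-2})$. Dividing the two asymptotics, the leading terms $x^{n-1}/(n-1)!$ cancel and the $O(x^{n-2})$ errors vanish in the limit, giving
\[
\mathfrak{d}(\ulambda)=\lim_{x\to\infty}\frac{N_{\ulambda}(P_x)}{\#\bigl(P_x\cap\Z_{\geq 0}^{n-1}\bigr)}=\prod_{i=1}^{n-1}\mathfrak{d}_{\lambda_i}(F_\ast).
\]
An essentially equivalent but more economical route, which I would record as an alternative, is an induction on $n$ via a Fubini-type slicing of $P_x$ along the last coordinate, using the one-variable density of Proposition~\ref{density propn} at each slice; there the same boundary/slab estimate reappears as the error in replacing the sum over slices by an integral.
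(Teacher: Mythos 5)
Your proposal is correct and follows essentially the same strategy as the paper's proof: tile the orthant by the rectangles $R_z$ over the sublattice with side lengths $p^{\lambda_i+1}$, apply Lemma~\ref{N lambda lemma} to the rectangles fully contained in $P_x$, and show the rectangles straddling the hyperplane $\sum_i z_i=x$ contribute negligibly. The paper phrases the boundary control as a sandwich $D_1(x)\subseteq P_x\subseteq D_2(x)$ with $D_2(x)\setminus D_1(x)\subseteq P_{x+C}\setminus P_{x-C}$, whereas you carry an explicit $O(x^{n-2})$ error term, but this is only a difference in bookkeeping.
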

\begin{proof}
Let $x>0$ be a real number, and recall that $P_x$ is the region in $\mathbb{R}_{\geq 0}^{n-1}$ given by \eqref{def of Px}. Recall from \eqref{definition of d_n} that 
\[\begin{split}\mathfrak{d}(\ulambda)=& \lim_{x\rightarrow\infty} \frac{\#\left\{\uk\in P_x\cap (\Z_{\geq 0}^{n-1})\mid \lambda_{\uk}=\ulambda\right\}}{\# P_x\cap (\Z_{\geq 0}^{n-1})} \\
=& \lim_{x\rightarrow\infty} \frac{\#\left\{\uk\in P_x\cap (\Z_{\geq 0}^{n-1})\mid \lambda_{\uk}=\ulambda\right\}}{\op{Vol}(P_x)}. \end{split}\]
In the above equality, we have simply used that as a function of $x$, 
\[\lim_{x\rightarrow \infty} \frac{\# P_x\cap (\Z_{\geq 0}^{n-1})}{\op{Vol}(P_x)}=1,\] the proof of which is strightforward, and thus omitted. For $i$ in the range $1\leq i\leq (n-1)$, let $e_i$ denote the coordinate vector with $0$ in all positions $j\neq i$ and $1$ in the $i$-th position. Let $\tilde{L}$ be the lattice in $\mathbb{R}^{n-1}$ generated by $p^{\lambda_i+1}e_i$ for $i=1, \dots, (n-1)$, and set $L:=\tilde{L}\cap \mathbb{R}_{\geq 0}^{n-1}$. For each point $z=(z_1, \dots, z_{n-1})$ in $L$, let $R_z$ be the rectangle defined by \eqref{def of Rz}. Note that the rectangles $R_z$ are all mutually disjoint as $z$ ranges over $L$. Furthermore, these rectangles cover $\mathbb{R}_{\geq 0}^{n-1}$.

\par We set $M_1(x):=\left\{z\in L\mid R_z\subseteq P_x\right\}$ and $M_2(x):=L\cap P_x$. For $i=1,2$, we set $D_i(x)$ to denote the union 
\[D_i(x):=\bigcup_{z\in M_i(x)} R_z.\] Note that $M_1(x)$ is contained in $M_2(x)$ and that $D_1(x)\subseteq P_x\subseteq D_2(x)$. Set $\Omega(x):=M_2(x)\backslash M_1(x)$, and we find that 
\[D_2(x)\backslash D_1(x)=\bigcup_{z\in \Omega(x)} R_z.\]
Let $t=(t_1, \dots, t_{n-1})\in D_2(x)$, thus, there is a unique point $z\in M_2(x)=L\cap P_x$ such that $t\in R_z$. We find that $t_i<z_i+p^{\lambda_i+1}$ for all $i$. Setting $C:=\sum_{i=1}^{n-1} p^{\lambda_i+1}$, we find that 
\[\sum_{i=1}^{n-1} t_i< \sum_{i=1}^{n-1} z_i+C\leq x+C.\]
Therefore, $t\in P_{x+C}$, and hence, we have shown that $D_2(x)\subseteq P_{x+C}$. Now suppose that $t\notin D_1(x)$. Then, we find that $R_z$ is not entirely contained in $P_x$, and hence $\sum_{i=1}^{n-1} (z_i+p^{\lambda_i+1})>x$. Therefore,
\[\sum_{i=1}^{n-1} t_i \geq \sum_{i=1}^{n-1} z_i= \sum_{i=1}^{n-1} (z_i+p^{\lambda_i+1})-C>x-C,\] i.e., $t\notin P_{x-C}$. We have therefore shown that $D_2(x)\backslash D_1(x)$ is contained in $P_{x+C}\backslash P_{x-C}$. A simple application of multivariable calculus shows that 
\[\op{Vol}(P_x)=\frac{1}{(n-1)!} x^{n-1}.\]We find that 
\[\lim_{x\rightarrow \infty} \frac{\op{Vol}(D_2(x))-\op{Vol}(D_1(x))}{\op{Vol}(P_x)}\leq \lim_{x\rightarrow \infty} \frac{\op{Vol}(P_{x+C})-\op{Vol}(P_{x-C})}{\op{Vol}(P_x)}=0.\]
We note that 
\[\begin{split}1\geq  \frac{\op{Vol}(D_1(x))}{\op{Vol}(P_x)}= &\frac{\op{Vol}(D_2(x))}{\op{Vol}(P_x)}-\left(\frac{\op{Vol}(D_2(x))-\op{Vol}(D_1(x))}{\op{Vol}(P_x)}\right) \\
\geq & 1- \left(\frac{\op{Vol}(D_2(x))-\op{Vol}(D_1(x))}{\op{Vol}(P_x)}\right).\end{split}\]
Therefore, 
\[\lim_{x\rightarrow \infty} \left|1- \frac{\op{Vol}(D_1(x))}{\op{Vol}(P_x)}\right|=0,\] and thus, 
\[\lim_{x\rightarrow \infty} \frac{\op{Vol}(D_1(x))}{\op{Vol}(P_x)} =1.\] Similar arguments show that 
\[\lim_{x\rightarrow \infty} \frac{\op{Vol}(D_2(x))}{\op{Vol}(P_x)} =1.\]
Recall that $D_i(x)$ is a disjoint union of rectangles $R_z$, as $z$ ranges over the finite set $M_i(x)$. It thus follows from Lemma \ref{N lambda lemma} 
that \[N_{\underline{\lambda}}(D_i(x))=\sum_{z\in M_i(x)}\op{Vol}(R_z) \prod_{i=1}^{n-1} \mathfrak{d}_{\lambda_i}(F_\ast)=\op{Vol}(D_i(x))\prod_{i=1}^{n-1} \mathfrak{d}_{\lambda_i}(F_\ast).\]
Therefore, we find that for $i=1,2$,
\[\lim_{x\rightarrow \infty} \frac{N_{\underline{\lambda}}(D_i(x))}{\op{Vol}(P_x)}=\left(\lim_{x\rightarrow \infty} \frac{\op{Vol}(D_i(x))}{\op{Vol}(P_x)}\right)\prod_{i=1}^{n-1} \mathfrak{d}_{\lambda_i}(F_\ast)=\prod_{i=1}^{n-1} \mathfrak{d}_{\lambda_i}(F_\ast).\]
We have that 
\[\frac{N_{\underline{\lambda}}(D_1(x))}{\op{Vol}(P_x)}\leq \frac{N_{\underline{\lambda}}(P_x)}{\op{Vol}(P_x)} \leq \frac{N_{\underline{\lambda}}(D_2(x))}{\op{Vol}(P_x)},\]
and 
\[\lim_{x\rightarrow \infty} \frac{N_{\underline{\lambda}}(D_1(x))}{\op{Vol}(P_x)}=\lim_{x\rightarrow \infty} \frac{N_{\underline{\lambda}}(D_2(x))}{\op{Vol}(P_x)}=\prod_{i=1}^{n-1} \mathfrak{d}_{\lambda_i}(F_\ast).\]
Thus, by the sandwich theorem for limits, 
\[\mathfrak{d}(\ulambda)=\lim_{x\rightarrow \infty} \left( \frac{N_{\underline{\lambda}}(P_x)}{\op{Vol}(P_x)} \right)=\prod_{i=1}^{n-1} \mathfrak{d}_{\lambda_i}(F_\ast).\]
\end{proof}
\par We present the proof of the main result of the manuscript.
\begin{proof}[Proof of Theorem \ref{main thm}]
\par The assertion \eqref{main thm p1} of the theorem follows from part \eqref{p1 of mu=0 lemma} of Lemma \ref{mu=0 lemma}. We prove part \eqref{main thm p2} of the theorem. Assume that $\lambda>0$. Let $\mathcal{A}_{n-1, \lambda}$ be the set of compositions from Definition \ref{def A}. Then we recall from \eqref{boring equation} that \[\mathfrak{d}_n(\lambda)=\sum_{\ulambda\in \mathcal{A}_{(n-1), \lambda}} \mathfrak{d}(\ulambda).\]
For $j$ in the range $1\leq j\leq n-1$, we let $\mathcal{A}_{n-1,\lambda}^{(j)}$ be the subset of $\mathcal{A}_{n-1,\lambda}$ consisting of compositions $\alpha=(\alpha_1, \dots, \alpha_{n-1})$ such that exactly $j$ entries $\alpha_i$ are non-zero. Let 
\[\pi_j:\mathcal{A}_{n-1,\lambda}^{(j)}\rightarrow \mathcal{C}_{j, \lambda} \] be the map which has the effect of deleting all entries that are $0$. In greater detail, given $\alpha=(\alpha_1, \dots, \alpha_{n-1})\in \mathcal{A}_{n-1,\lambda}^{(j)}$, let $\alpha_{i_1}, \dots, \alpha_{i_j}$ be the non-zero entries ordered such that $i_1<i_2<i_3<\dots <i_{j}$. We may express $\alpha_{i_k}=p^{a_k}$, and note that the tuple $(p^{a_1}, \dots, p^{a_j})$ is a composition in $\mathcal{C}_{j, \lambda}$. We set $\pi_j(\alpha):=(p^{a_1}, \dots, p^{a_j})$.
\par The map $\pi_j$ is clearly surjective, and the fibers are easily seen to have cardinality equal to $\binom{n-1}{j}$. We thus note that
\begin{equation}\label{last equation}
\#\mathcal{A}_{n-1, \lambda}^{(j)}=\binom{n-1}{j}\#\mathcal{C}_{j,\lambda}
\end{equation}The relation \eqref{boring equation} can be rewritten as follows
\[\mathfrak{d}_n(\lambda)=\sum_{j=1}^{n-1} \left(\sum_{\ulambda\in \mathcal{A}_{n-1, \lambda}^{(j)}} \mathfrak{d}(\ulambda)\right).\]

Let $\ulambda\in \mathcal{A}_{n-1, \lambda}^{(j)}$, then, by Lemma \ref{lemma product lambda}, we have that \[\mathfrak{d}(\ulambda)=\prod_{i=1}^{n-1} \mathfrak{d}_{\lambda_i}(F_\ast).\]
It follows from Proposition \ref{density propn} that for $\ulambda\in \mathcal{A}_{n-1, \lambda}^{(j)}$,
\[\mathfrak{d}(\ulambda)=\frac{1}{2^{n-1}p^{\lambda}}\left(1-\frac{1}{p}\right)^j.\] Therefore, we find that 
\[\begin{split}\mathfrak{d}_n(\lambda)= &  \frac{1}{2^{n-1}p^{\lambda}}\sum_{j=1}^{n-1}\left(1-\frac{1}{p}\right)^j \#\mathcal{A}_{n-1, \lambda}^{(j)}, \\ 
& \frac{1}{2^{n-1}p^\lambda}\sum_{j=1}^{n-1}\binom{n-1}{j}\left(1-\frac{1}{p}\right)^j\#\mathcal{C}_{j,\lambda},\end{split}\]
where the second equality follows from \eqref{last equation}.
\par The result in the case when $\lambda=0$ follows via a similar argument, which is omitted.
\end{proof}

\par \textbf{An Example:} We illustrate the above result through an explicit example. We let $p=5$ and $n=8$ and $\lambda=31$. We wish to evaluate $\mathfrak{d}_n(\lambda)=\mathfrak{d}_8(31)$ for the prime $p=5$. According to \eqref{main thm eqn}, we have that
\[\mathfrak{d}_8(31)=\frac{1}{2^{7}5^{31}}\sum_{j=1}^{7}\binom{7}{j}\left(1-\frac{1}{5}\right)^j\#\mathcal{C}_{j,31}.\]
We evaluate the numbers $\#\mathcal{C}_{j,\lambda}$ for $j$ in the range $0\leq j \leq 7$, 
\begin{itemize}
\item $j=7$: There are $14$ compositions in total, namely
\[\begin{split}
& (1,5,5,5,5,5,5),(5,1,5,5,5,5,5),(5,5,1,5,5,5,5),(5,5,5,1,5,5,5),\\ &(5,5,5,5,1,5,5),(5,5,5,5,5,1,5), (5,5,5,5,5,5,1),\\
& (5^2,1,1,1,1,1,1),(1,5^2,1,1,1,1,1),(1,1,5^2,1,1,1,1), (1,1,1,5^2,1,1,1),\\
& (1,1,1,1,5^2,1,1),(1,1,1,1,1,5^2,1),(1,1,1,1,1,1,5^2).
\end{split}
\]
\item j=3: There are $6$ compositions, namely
\[\begin{split}
& (5^2,5,1), (5^2, 1,5), (5, 5^2, 1), (5, 1,5^2), (1, 5, 5^2), (1, 5^2, 5).
\end{split}\]
For the other values of $j$, there are no compositions. Hence, we find that 
\[\mathfrak{d}_8(31)=\frac{1}{2^75^{31}}\left(6\binom{7}{3}(4/5)^3+14 \binom{7}{7}(4/5)^7\right).\]
\end{itemize}

\bibliographystyle{alpha}
\bibliography{references}

\newcommand{\etalchar}[1]{$^{#1}$}
\begin{thebibliography}{PTBMW21}

\bibitem[AM06]{artin2006etale}
Michael Artin and Barry Mazur.
\newblock {\em Etale homotopy}, volume 100.
\newblock Springer, 2006.

\bibitem[Bha05]{bhargava2005density}
Manjul Bhargava.
\newblock The density of discriminants of quartic rings and fields.
\newblock {\em Annals of Mathematics}, pages 1031--1063, 2005.

\bibitem[BST{\etalchar{+}}20]{bhargava2020bounds}
Manjul Bhargava, Arul Shankar, Takashi Taniguchi, Frank Thorne, Jacob
  Tsimerman, and Yongqiang Zhao.
\newblock Bounds on 2-torsion in class groups of number fields and integral
  points on elliptic curves.
\newblock {\em Journal of the American Mathematical Society}, 33(4):1087--1099,
  2020.

\bibitem[Bur35]{burau1935zopfgruppen}
Werner Burau.
\newblock {\"U}ber zopfgruppen und gleichsinnig verdrillte verkettungen.
\newblock In {\em Abhandlungen aus dem Mathematischen Seminar der
  Universit{\"a}t Hamburg}, volume~11, pages 179--186. Springer, 1935.

\bibitem[CL84]{cohen1984heuristics}
Henri Cohen and Hendrik~W Lenstra.
\newblock Heuristics on class groups of number fields.
\newblock In {\em Number Theory Noordwijkerhout 1983}, pages 33--62. Springer,
  1984.

\bibitem[DH71]{davenport1971density}
Harold Davenport and Hans~Arnold Heilbronn.
\newblock On the density of discriminants of cubic fields. ii.
\newblock {\em Proceedings of the Royal Society of London. A. Mathematical and
  Physical Sciences}, 322(1551):405--420, 1971.

\bibitem[EPW17]{ellenberg2017}
Jordan Ellenberg, Lillian Pierce, and Melanie Wood.
\newblock On $\ell$-torsion in class groups of number fields.
\newblock {\em Algebra \& Number Theory}, 11(8):1739--1778, 2017.

\bibitem[EV07]{ellenberg2007reflection}
Jordan~S Ellenberg and Akshay Venkatesh.
\newblock Reflection principles and bounds for class group torsion.
\newblock {\em International Mathematics Research Notices}, 2007, 2007.

\bibitem[FK07]{fouvry20074}
{\'E}tienne Fouvry and J{\"u}rgen Kl{\"u}ners.
\newblock On the 4-rank of class groups of quadratic number fields.
\newblock {\em Inventiones mathematicae}, 167(3), 2007.

\bibitem[Hat]{hatcheralgebraic}
Allen Hatcher.
\newblock Algebraic {T}opology. {C}ambridge {U}niversity {P}ress, 2002.

\bibitem[HJK{\etalchar{+}}19]{holmin2019missing}
Samuel Holmin, Nathan Jones, P{\"a}r Kurlberg, Cam McLeman, and Kate Petersen.
\newblock Missing class groups and class number statistics for imaginary
  quadratic fields.
\newblock {\em Experimental Mathematics}, 28(2):233--254, 2019.

\bibitem[HMM06]{hillman2005pro}
Jonathan Hillman, Daniel Matei, and Masanori Morishita.
\newblock Pro-{$p$} link groups and {$p$}-homology groups.
\newblock In {\em Primes and knots}, volume 416 of {\em Contemp. Math.}, pages
  121--136. Amer. Math. Soc., Providence, RI, 2006.

\bibitem[Kaw96]{kawauchi1996survey}
Akio Kawauchi.
\newblock {\em Survey on knot theory}.
\newblock Springer {S}cience \& {B}usiness {M}edia, 1996.

\bibitem[Kly20]{klys2020distribution}
Jack Klys.
\newblock The distribution of p-torsion in degree p cyclic fields.
\newblock {\em Algebra \& Number Theory}, 14(4):815--854, 2020.

\bibitem[KT08]{kassel2008braid}
Christian Kassel and Vladimir Turaev.
\newblock {\em Braid groups}, volume 247.
\newblock Springer {S}cience and Business Media, 2008.

\bibitem[Lee00]{lee2000introduction}
John~M Lee.
\newblock Introduction to {S}mooth manifolds—{GSM} no. 218, 2000.

\bibitem[Maz63]{mazur1963remarks}
Barry Mazur.
\newblock Remarks on the alexander polynomial.
\newblock {\em Unpublished notes}, 64, 1963.

\bibitem[Mil06]{milne2006arithmetic}
James~S Milne.
\newblock {\em Arithmetic duality theorems}.
\newblock Citeseer, 2006.

\bibitem[Mor11]{morishita2011knots}
Masanori Morishita.
\newblock {\em Knots and primes: an introduction to arithmetic topology}.
\newblock Springer Science \& {B}usiness Media, 2011.

\bibitem[PTBMW21]{pierce2021conjecture}
Lillian~B Pierce, Caroline~L Turnage-Butterbaugh, and Melanie Matchett~Wood.
\newblock On a conjecture for $\ell $-torsion in class groups of number fields:
  from the perspective of moments.
\newblock {\em Mathematical Research Letters}, 28(2), 2021.

\bibitem[PTBW20]{pierce2020effective}
Lillian~B Pierce, Caroline~L Turnage-Butterbaugh, and Melanie~Matchett Wood.
\newblock An effective chebotarev density theorem for families of number
  fields, with an application to $\ell$-torsion in class groups.
\newblock {\em Inventiones mathematicae}, 219(2):701--778, 2020.

\bibitem[Was97]{washington1997introduction}
Lawrence~C Washington.
\newblock {\em Introduction to cyclotomic fields}, volume~83.
\newblock Springer {S}cience \& {B}usiness Media, 1997.

\bibitem[Won99]{wong1999rank}
Siman Wong.
\newblock On the rank of ideal class groups.
\newblock In {\em Proc. Fourth Canad. Number Theory Conf}, pages 377--383,
  1999.

\bibitem[WW21]{wang2021moments}
Weitong Wang and Melanie~Matchett Wood.
\newblock Moments and interpretations of the cohen--lenstra--martinet
  heuristics.
\newblock {\em Commentarii Mathematici Helvetici}, 96(2):339--387, 2021.

\end{thebibliography}
\end{document}